\author{Matthieu Arfeux}
\title {Reading escaping trees from Hubbard trees in $\St_n$}
\date{}
\newtheorem{theorem} {Theorem}[section]
\newtheorem{proposition}[theorem]{Proposition}
\newtheorem{lemma}[theorem]{Lemma}
\newtheorem*{lemma*}{Lemma}
\newtheorem{corollary}[theorem]{Corollary}
\newtheorem{definition}[theorem]{Definition}
\newtheorem*{conjecture*}{Conjecture}
\newtheorem{example}[theorem]{Example}
\theoremstyle{plain}
\newtheorem{theoremint} {Theorem}
\theoremstyle{remark}
\newtheorem{remark}[theorem]{Remark}
\renewenvironment{proof}{\noindent{\bf Proof. }}{\hfill{$\square$} \vskip.3cm}
\def\cal{\mathcal}
\def\C{{\mathbb C}}
\def\D{{\mathbb D}}
\def\N{{\mathbb N}}
\def\Q{{\mathbb Q}}
\def\R{{{\mathbb R}}}
\def\St{{\mathcal S}}
\def\Z{{\mathbb Z}}
\def\Per{{\rm Per}}
 \def\epsilon{{\varepsilon}}
\def\intertw{\angle \hskip -7pt \text{\reflectbox{$\angle$}}}
\begin{document}

\maketitle

\begin{abstract}

We prove that the parameter space of monic centered cubic polynomials with a critical point of exact period $n=4$ is connected. The techniques developed for this proof work for every $n$ and provide an interesting relation between escaping trees of DeMarco-McMullen and Hubbard trees. 

\end{abstract}

\medskip
\noindent{\textbf{About this preprint.}} This preprint is based on an expected result of current work by Cui Guizhen. Hence, it is not in its final version.



\section{Introduction}

Consider the space of monic centered cubic polynomials of the form
$$P_{a,b}(z):=z^3-3a^2z+b,$$
where $(a,b)\in\C^2$. All of these polynomials have critical points at $a$ and $-a$. Following \cite{CubPol1} and \cite{CubPol2},we are interested in the subspace $\Per^{cm}_n$ consisting of the parameters for which  the polynomial has a critical point of exact period $n$. This space naturally splits as the union $$\Per^{cm}_n=\St_n\cup\St_n^-,$$
where $a$ is periodic for points in $\St_n$ and $-a$ for those in $\St_n^-$. 

We are interested in the following conjecture.

\begin{conjecture*}[Milnor] The set $\St_n$ is irreducible for every $n\geq1$.
\end{conjecture*}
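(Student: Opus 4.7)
The strategy is to reduce irreducibility to connectedness. The variety $\St_n \subset \C^2$ is cut out by the equation expressing that $a$ is a fixed point of $P_{a,b}^{\circ n}$ but not of $P_{a,b}^{\circ k}$ for any proper divisor $k$ of $n$. Standard transversality arguments in the spirit of Milnor and Epstein show that $\St_n$ is a smooth affine curve of pure dimension one. Since a smooth connected complex curve is automatically irreducible, the whole problem reduces to proving that $\St_n$ is connected.

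To prove connectedness, the plan is to exploit the correspondence announced in the abstract between the DeMarco--McMullen escaping trees, which parametrize the ends of $\St_n$ as the free critical point $-a$ escapes to infinity, and the Hubbard trees, which classify the postcritically finite centers of the hyperbolic components lying in the compact part of $\St_n$. Concretely, one compactifies $\St_n$ by adjoining ideal boundary points labelled by escape trees, then builds a combinatorial readout from each escape tree to the Hubbard tree of any nearby postcritically finite parameter. Connectedness is then established in three steps: first, identify each hyperbolic component of $\St_n$ with its unique center and its associated Hubbard tree; second, using pinching and wringing deformations in the spirit of Branner--Hubbard, show that every hyperbolic component limits along a suitable family of parameter rays to an end of $\St_n$ encoded by an escape tree; and third, invoke the escape tree / Hubbard tree dictionary to show that any two hyperbolic components whose Hubbard trees are read off from a common escape tree are joined by a path in $\St_n$. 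Induction on the natural complexity of escape trees then connects all hyperbolic components to one another.

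The principal obstacle lies in the third step: one has to ensure that the abstract combinatorial dictionary actually tracks genuine parameter paths and not merely formal data. In practice this amounts to showing that the wringing deformation inside $\St_n$ that pushes a parameter toward an end can be reversed consistently from any given escape tree to each Hubbard tree it encodes, all while preserving the constraint that $a$ remain periodic of exact period $n$. This rigidity statement, which is the most delicate ingredient, should follow from the expected result of Cui Guizhen referenced in the note ``About this preprint''; the case $n=4$ treated in the body of the paper then serves as a concrete template, in which one enumerates all escape trees, reads off the associated Hubbard trees, verifies that every periodic Hubbard tree actually appears, and thereby concludes connectedness by gluing hyperbolic components along their common escape ends.
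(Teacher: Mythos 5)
The statement you are attempting to prove is labelled in the paper as a \emph{conjecture}, not a theorem. The paper does not prove it for general $n$; it establishes only the case $n=4$ (Theorem~\ref{main}), and explicitly discusses why the method as presented does not close the argument for larger $n$. In particular Proposition~\ref{prob5} exhibits an escape component of $\St_5$ that has no Type~1 or Type~2 intertwining on its boundary, so the network of intertwining-induced boundary points the paper builds does not, as stated, reach every escape component once $n\geq5$. Your final step — ``Induction on the natural complexity of escape trees then connects all hyperbolic components to one another'' — is precisely the assertion the paper says is not available in general, and you offer no argument to surmount the $\St_5$ obstruction.

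There are also structural mismatches with the paper's actual strategy, even for $n=4$. The paper's reduction is not about hyperbolic components in the connectedness locus: it uses the compactness of ${\cal C}$ and the fact that irreducible components of affine curves are unbounded to reduce connectedness of $\St_n$ to the claim that all \emph{escape} components lie in a single connected component. The mechanism for joining escape components is not a Branner--Hubbard wringing deformation traced backwards from an ideal end, but Cui's theorem on canonical paths emanating from a parabolic intertwining (Theorem~\ref{Cui}): one places an intertwined parabolic polynomial in ${\cal C}$, perturbs it along each perturbable ray into a \emph{different} escape component, and then uses the escaping-lamination / Hubbard-tree dictionary (Propositions~\ref{corolam} and~\ref{mainlemma}) to identify which component each perturbation lands in. The Hubbard tree in the paper is attached to an escape component via its connected restriction (a quadratic polynomial-like restriction), not via the centers of bounded hyperbolic components as in your outline. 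Finally, the one-to-one character of the labelling, needed to conclude, is only established for $n=4$ (Proposition~\ref{DPclass}); the paper notes it already fails for $n=5$. So the proposal does not constitute a proof of the conjecture, and does not match the paper's proof of the special case it does treat.
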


In \cite{CubPol1}, it is proven that these curves are smooth and it remains to prove that they are connected. This result was proven in \cite{CubPol1} for $n=1,n=2$ and $n=3$. It was observed in \cite{CubPol2} for $n=4$ by computing local pictures of this curve and observing after gluing them together that the space is connected. This paper describes a non-computer-assisted method for proving that $\St_4$ is connected so we can deduce the following:

\begin{theoremint}\label{main}
 The space $\St_4$ is irreducible.
\end{theoremint}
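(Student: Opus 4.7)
The plan is to compactify $\St_4$ inside the DeMarco--McMullen space of rescaling limits, to understand how the irreducible components of $\St_4$ accumulate on this boundary, and to use a combinatorial matching between the Hubbard trees of interior post-critically finite parameters and the escaping trees appearing at the boundary to force connectedness. Since $\St_4$ is already known to be smooth by \cite{CubPol1}, irreducibility reduces to the statement that $\St_4$ is connected.

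First I would enumerate the post-critically finite parameters of $\St_4$ by their Hubbard trees; for period $n=4$ this is a finite combinatorial list that can be written down explicitly using the dynamics of $P_{a,b}$ on the periodic critical orbit of $a$ together with the (pre-)periodic behavior of $-a$. Since every irreducible component of the smooth affine curve $\St_4$ contains such PCF centers, it suffices to connect all centers to each other. Next I would describe the boundary of $\St_4$ in the DeMarco--McMullen compactification: on $\St_4$ the co-critical point $-a$ is free to escape to infinity while $a$ stays periodic, and the asymptotic behavior of its orbit is encoded by a finite $\R$-tree in the sense of DeMarco--McMullen.

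The key technical step, and the content announced by the title, is a procedure that reads off the escaping tree at a boundary point of $\St_4$ directly from the Hubbard tree at any interior PCF center lying in a component accumulating at that boundary point. Concretely, one tracks how the Hubbard tree deforms as the orbit of $-a$ is pulled out to infinity along a ray in $\St_4$, and matches the combinatorial data of the resulting collapse with the metric tree that appears in the limit. The main obstacle will be to show that this reading is both well-defined (independent of the chosen path to the boundary) and exhaustive (every escaping tree arising on the boundary of $\St_4$ is produced by some Hubbard tree from the list); this should rely on Thurston-style rigidity for post-critically finite cubic polynomials together with the explicit local pictures of $\St_4$ computed in \cite{CubPol2}, which I would use as a consistency check rather than as input.

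Once the reading map is in place, connectedness follows by an incidence argument at the boundary: if $\St_4$ had at least two components, their closures in the DeMarco--McMullen compactification would accumulate on disjoint sets of escaping trees, but the reading procedure will show that all period-four Hubbard trees produce escaping trees lying in a single connected piece of the boundary, contradicting this disjointness. Hence $\St_4$ has a unique connected component, and Theorem~\ref{main} follows from smoothness.
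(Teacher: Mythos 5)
Your proposal is structurally different from the paper's argument, and it has gaps that prevent it from going through as written.

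The paper does not reduce to connecting post-critically finite centers. It instead observes that the connectedness locus $\mathcal{C}$ is compact while irreducible components of algebraic curves are unbounded, so every irreducible component of $\St_4$ must contain at least one escape component; the problem then becomes connecting the finitely many escape components to one another. Your reduction step --- ``since every irreducible component of the smooth affine curve $\St_4$ contains such PCF centers, it suffices to connect all centers to each other'' --- is not justified: it is not obvious that an irreducible component contains a PCF parameter at all, and trying to establish this would be harder than the unboundedness argument the paper actually uses.

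The central technical device in the paper is likewise absent from your proposal. The ``bridge'' between Hubbard trees and escaping trees is not a deformation of the Hubbard tree of a cubic PCF map as $-a$ is pulled to infinity. Rather, the paper builds parabolic cubic polynomials in $\partial\mathcal{C}$ as intertwinings $Q \intertw R$ of a parabolic quadratic $Q$ with a PCF quadratic $R$ (Epstein--Yampolsky, Ha\"issinsky), then invokes the perturbation theorem of Cui Guizhen (Theorem~\ref{Cui}) to move off such a parabolic polynomial along a perturbable dynamical ray $\theta$ into an escape component. Proposition~\ref{corolam} identifies the escaping lamination of the perturbed polynomial with a lamination read from the ray structure at the parabolic point, and Proposition~\ref{mainlemma} shows the connected restriction of the perturbed polynomial is $R$; so the ``Hubbard tree'' being read is that of the quadratic $R$, not of a cubic PCF map. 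Your sketch of ``tracking how the Hubbard tree deforms'' describes no actual construction, and the Julia set becoming disconnected along the way is precisely the obstacle the intertwining surgery is designed to circumvent.

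Finally, your incidence argument is too coarse. The paper does not argue that all escaping trees lie in ``a single connected piece of the boundary''; it exhibits, for a concrete finite list of intertwinings, explicit pairs of escape components that share a boundary point in $\mathcal{C}$, and chains these identifications until all escape components are linked. For $n=4$ this uses the fact (Proposition~\ref{DPclass}, relying on DeMarco--Pilgrim) that the pair (Hubbard tree, escaping tree) determines the escape component, so the perturbation data pins down exactly which escape component each perturbation lands in. Without the intertwining construction, Cui's perturbation theorem, and the quadratic connected-restriction mechanism, there is no way to carry out the matching you describe, and the boundary-incidence claim has no support.
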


Recall that the dynamical plane of a polynomial $P$ splits into two pieces:
\begin{itemize}
\item the filled Julia set $K_P$ consisting of the set of points that are not attracted to infinity, and
\item the basin of attraction of infinity $\Omega_\infty$ which is exactly the complement of $K_P$.
\end{itemize}
A cubic polynomial $P\in \St_n$ has at most two (finite) critical points, so from the point of view of the parameter space we have the dichotomy: 
\begin{itemize}
\item either the two critical points of $P$ have bounded orbits under iteration and the filled Julia set of $P$ is connected,
\item or the fllled Julia set of $P$ is disconnected.
\end{itemize}
According to this remark, we define ${\cal C}$ to be the connectedness locus of $\St_n$, the space of polynomial whose filled Julia set is connected. We call escape component each connected component of its complement ${\cal E}:=\C\setminus {\cal C}$.
The connectedness locus is known to be compact. Since irreducible components of algebraic curves are unbounded, we just have to prove that a connected component of $\St_n$ contains all the escape components. 

We are going to prove the existence of points at the boundary of different escape components.
More precisely, we consider some polynomials in ${\cal C}$ obtained by intertwining a parabolic polynomial and a super-attractive polynomial. The \emph{intertwining} is a surgery introduced by A.Epstein and M.Yampolsky in \cite{EY}, generalized to this case by P.Haïssinsky in \cite{Ha} that consist in creating a cubic polynomial from two quadratic ones.
Using a current work of Cui G. \cite{C2} we perturb them in the direction of different  escape components and we prove that these perturbations are sufficiently controlled so that we can recognize the escape components we are moving to. 

For this we need to label the escape components. We associate to each one a pair constituted by a Hubbard tree and an escaping tree. On the one hand, \emph{Hubbard trees} were introduced in \cite{Orsay} (see also \cite{Poirier}) to study connected fielled Julia sets and on the other hand, \emph{escaping trees} have been developed to study the basin of attraction of infinity in \cite{Treesph} and improved in a long series of paper by L.DeMarco and K.Pilgrim (\cite{DP3}, \cite{DP1},\cite{DP2},\cite{DP4}, see also L.DeMarco and A.Schiff \cite{DS1},\cite{DS2}). 

We then conclude the proof by finding sufficiently enough polynomials in the case of $n=4$ to connect all the escape components.

The methods developed in this paper are valid for all periods but they prove the conjecture only in the case of period 4, when the labeling is one to one. One may expect that these methods with a little more work give the result for higher periods but the combinatorics that we would have to check would quickly leave the human level.
However, the novelty is here to \emph{describe a bridge between Hubbard trees and escaping trees}. Those are completely different kinds of trees not only from their construction but also from the kind of question they were introduced for. We will prove that given a Hubbard tree of a quadratic polynomial, we can construct escaping trees of a cubic polynomial.

\medskip
\noindent{\textbf{Outline.}}
In Section \ref{Chap2}, we recall the definitions of escaping trees, Hubbard trees, kneading sequences and some of their basic properties. We introduce the notion of an escaping lamination and prove that it contains at least the informations of an escaping tree.

In section \ref{Chap3}, after recalling some basic tools about rays in the parameter space and in the dynamical plane, we look at polynomials in $\St_n$ with a Fatou component of parabolic type. We construct from each of these polynomial a lamination that looks like an escaping lamination and we prove that it is the escaping lamination of the perturbation of the polynomial. We also prove that we can find the Hubbard tree of the perturbed polynomials. 

In section \ref{Chap4}, we recall briefly the construction of intertwinings and we deduce the existence of some common boundary points between escape components. We give a list of such points in the cases $n=4$ and $n=5$. These points are sufficient to prove Theorem \ref{main}. We then explain how we construct an escaping tree of a cubic polynomial from the Hubbard tree of a quadratic polynomial. We finally discuss about problems for generalizing these techniques for proving Milnor's conjecture for $n\geq5$.

\medskip
\noindent{\textbf{Acknowledgments.}} This paper is the result of a lot of different discussions. I would want to thank more especially Cui Guizhen, Laura DeMarco and Jan Kiwi. I would also want to thank Araceli Bonifant and John Milnor for their attentive listening and precious advice.


\section{Escaping trees and Hubbard trees}\label{Chap2}


\subsection{ Green function and escaping tree.}

Given a cubic polynomial $P$, there is a natural map $g_P:\C\to[0,\infty)$ defined by $$g_P(z):=\lim_{n\to\infty}\frac{1}{3^n}\log^+|P^n(z)|,$$ called the Green function which is identically equal to 0 on the filled Julia set $K_P$ and satisfies $g_p(P(z))=3\cdot g_p(z)$ (see \cite{DynInOne} for example). Topologically, the non zero level curves of the Green function are unions of circles which can intersect only at backward preimages of the escaping critical points of $P$, which are exactly the critical points of $g_P$. These curves are called critical level curves.

In \cite{Treesph}, L. De Marco and C. McMullen introduced a natural real tree associated to a cubic polynomial, obtained after collapsing each connected component of all the Green levels curves to points. Edges correspond to annuli made of level curves and vertices correspond to grand orbits of critical level curves.
The points corresponding to connected components of the filled Julia set are called end points of the tree. 
 In this paper we will just consider this tree as a combinatorial tree. 
 
 This tree comes with a natural dynamic on it; indeed every polynomial maps a connected component of Green's level curve to an other one. This dynamical tree will be called an escaping tree and it is invariant in an escape component.

We are interested in such trees when the polynomials are in $\St_n$. In \cite{DP4}, L. De Marco and K. Pilgrim proved the following:

\begin{proposition}\label{restricyiuov}
 The subtree consisting of the convex hull of the ends containing the critical point orbit together with the corresponding dynamics on the ends is sufficient to recover the all escaping tree.
 \end{proposition}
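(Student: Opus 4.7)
The plan is to reconstruct the escaping tree $T$ together with its induced dynamics $f \colon T \to T$ by iteratively pulling back the subtree $T_0$. The starting observation is that every end of $T$---that is, every connected component of $K_P$---is eventually mapped by $f$ into an end of $T_0$. This follows because $P \in \St_n$ admits a unique cycle of bounded periodic Fatou components, namely the $n$-cycle containing the critical point $a$; every other bounded Fatou component is preperiodic to this cycle by Sullivan-type arguments, and any component of $K_P$ inherits this preperiodicity via the component containing it. Consequently, the set of all ends of $T$ equals $\bigcup_{k \geq 0} f^{-k}(\mathrm{ends}(T_0))$.

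Given this, I would build $T$ as an increasing union of subtrees. For each $k \geq 0$, let $T^{(k)}$ be the convex hull in $T$ of those ends $U$ for which $f^k(U)$ is an end of $T_0$. Then $T^{(0)} = T_0$, and $T^{(k)}$ is obtained from $T^{(k-1)}$ by attaching the $f$-preimages of its new ends together with the geodesic paths joining them. The pullback step is controlled by the fact that $f$ is a branched covering of degree $3$: at each vertex the covering degrees on the incident edges sum to $3$, and each critical vertex, corresponding to the grand orbit of the escaping critical point $-a$, carries a prescribed local branching profile. Thus, inductively, $T^{(k)}$ together with its dynamics is uniquely determined by $T^{(k-1)}$ together with its dynamics; passing to the closure of $\bigcup_{k \geq 0} T^{(k)}$ recovers all of $T$.

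The main obstacle is the base case of this induction: one must deduce the full action of $f$ on the interior of $T_0$ from only the combinatorial subtree $T_0$ and the cyclic permutation it induces on ends. I would argue that this extension is forced by three constraints: (i) the scaling relation $g_P \circ P = 3\, g_P$ pins down how $f$ moves points along the tree in terms of the Green level; (ii) $f$ sends the geodesic between two ends to a path containing the geodesic between their images, which determines the combinatorial action on the edges of $T_0$; and (iii) the local degrees of $f$ at each vertex must sum to $3$, together with the requirement that $f$ extend to a branched covering on all of $T$. Checking that these constraints jointly pin down $f|_{T_0}$, and then running the inductive pullback to recover $T$ together with its dynamics, is the technical heart of the proof.
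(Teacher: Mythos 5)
The paper does not actually prove this proposition: it is cited directly to DeMarco--Pilgrim \cite{DP4} and stated without argument, so there is no ``paper's own proof'' for your attempt to match. Evaluating your sketch on its own terms, the pullback framework is the right general shape of the argument, but the launching observation is incorrect. You claim that every end of $T$ --- every connected component of $K_P$ --- is eventually mapped by $f$ into an end of $T_0$, and you justify this by saying that every bounded Fatou component is preperiodic to the superattracting cycle and that ``any component of $K_P$ inherits this preperiodicity via the component containing it.'' The first half is fine (Sullivan plus the fact that $-a$ escapes), but the inference to components of $K_P$ does not follow: $K_P$ has uncountably many connected components, almost all of which are single points of the Julia set carrying no Fatou component at all, and such point components are generically \emph{not} preperiodic to the cycle. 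So $\bigcup_{k\ge 0} f^{-k}(\mathrm{ends}(T_0))$ is a countable, proper subset of the ends of $T$, and the identity you assert at the end of your first paragraph is false.

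What is actually true, and what your construction tacitly uses, is weaker: the \emph{vertices} of $T$ all lie in the grand orbit of the single critical level curve (the figure-eight through $-a$), and that critical vertex does lie in $T_0$ since the critical orbit is separated by it. Because preimages of the superattracting Fatou cycle accumulate on all of $J_P$, every vertex lying below the critical vertex is eventually captured by some $T^{(k)}$, and the trunk of $T$ above the critical vertex is a trivial half-line that can be reattached by hand; so the increasing union does recover the combinatorial tree, but for a reason quite different from the one you give, and you would need to argue density of preimages rather than preperiodicity of all ends. Finally, you are right to flag as ``the technical heart'' the base case of reconstructing $f$ on the interior of $T_0$ from the end data together with the degree-$3$ constraint and the scaling $g_P\circ P = 3 g_P$; this is exactly where the content of the DeMarco--Pilgrim result sits, and your sketch leaves it genuinely open rather than proving it. As written, then, the argument has one false step (the claim about all ends) and one acknowledged gap (the base case), so it does not yet constitute a proof.
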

 We will call simplified escaping tree such a tree together with its restricted dynamics.


\subsection{Böttcher coordinates and escaping lamination.}\label{labelle}

Recall that every $P\in\St_n$ has a critical point $a$ of exact period $n$ and a second critical point at $-a$. Define $r:=e^{g_P(-a)}$.

There exist a biholomorphism 
$$\phi_P:\{g_P>g_P(c_P)\}\to\{ z\in\C:|z|> r\}$$
 that satisfies $\phi_P\circ P=(\phi_P)^3$ and $g_P=log|\phi_P|$  (see \cite{DynInOne} for example). It is unique up to multiplication by a third root of unity. We chose for $\phi_P$ the function that is tangent to identity close to infinity. This map called the Böttcher coordinate.

For $\theta \in\R/2\pi\Z$, we call (dynamical) ray of angle $\theta$ the set 

$${\cal R}_\theta:=\phi_P^{-1}(\{\rho e^{i\theta}:\rho>r\}).$$ This set correspond to the gradient flow line of $g_P$ on $\{g_P>r\}$ and can be uniquely extended to $\Omega_\infty$ as soon as the trajectory does not meet the backward orbit of $-a$ which correspond exactly to the critical points of $g_P$. 

For $k\in\N$ we define $\Omega_k:=\{3^k.g_P>r\}$ and denote by ${\cal R}^k_\theta$ and call {$k$-generalized} ray of angle $\theta$ the union of all possible such extensions on $\overline{\Omega_k}$. We define an equivalence relation on $\R/2\pi\Z$ defined as follows: 
$$\theta_1\sim_k\theta_2\quad\iff\quad {\cal R}^k_{\theta_1}\cap {\cal R}^k_{\theta_2}\neq \emptyset.$$

\begin{example}\label{lemni}For $k=0$ and $-a\in \Omega_\infty$, the unique class which is non trivial contains exactly the two angles corresponding to the generalized rays containing $c_P$. 
\end{example}

\begin{remark}\label{dynrel}
A $k$ class is always contained in a $k+1$ class, and the elements of a $k+1$ class are mapped by multiplication by 3 to the elements of a $k$ class.
\end{remark}

We define the $k$-escaping critical portrait to be ${\bf \Theta}^k_P$ the set of non trivial classes of the relation $\sim_k$. We denote by $\D$ the unit disc of $\C$ equipped with the hyperbolic metric.

\begin{definition}[Escaping lamination]
  Given any $k\in\N$ and a $k$-escaping critical portrait ${\bf \Theta}^k_P=\{\Theta_1,\ldots,\Theta_p\}$, we define the $k$-lamination $\mathcal{L}_k$
to be the subset of ${\C}$ obtained as the union of the convex hull of ${\Theta_i \subset \R/2\pi\Z \equiv \partial \D}$ inside $\D$.

We call $k$-gap every connected component of the complement of $\mathcal{L}_k$ in $\D$ and denote by $\mathcal{G}_k$ the set of $k$-gaps.
\end{definition}

All of these definitions extends naturally to the case $k\in\Z$. For $k<0$ we have $\mathcal{L}_k=\emptyset$ and $\mathcal{G}_k=\{\D\}$. An example of lamination is sketched on figure \ref{lamin}.

It follows from these definitions that 
$$\mathcal{L}_{k}\subset \mathcal{L}_{k+1}\quad\text{and}\quad \mathcal{G}_k\subset\mathcal{G}_{k+1}.$$


We remark that $\Omega_\infty=\bigcup\Omega_k$. Let $B_k$ be the set of connected components of $\Omega_{k}\setminus\overline{\Omega_{k-1}}$.
There is a natural bijection $$\phi_k:\mathcal{G}_k\to B_k,$$ 
that to $\gamma_k\in \mathcal{G}_k$, associate the unique $b_k\in B_k$ such that $${\cal R}_\theta^k\cap b_k\neq\emptyset\iff\theta\in\partial \gamma_k\cap\partial\D.$$
Note that if $\gamma_k$ is bounded by $\Theta_1,\ldots,\Theta_p\in{\bf \Theta}^k_P$  then $\phi_k(\gamma_k)$ is included in a component of $\Omega_{k}$ bounded by the corresponding generalized rays.

 As $P$ defines a map from $B_{k+1}$ to $B_k$, we define by conjugacy a map ${F_{k+1}:\mathcal{G}_{k+1}\to \mathcal{G}_k}$ i.e. such that the following diagram commutes:
 
 \centerline{$\xymatrix{
    B_{k+1} \ar[r]^P   & B_k  \\
   \mathcal{G}_{k+1} \ar[u]^{\phi_{k+1}}\ar[r]^{F_{k+1}} & \mathcal{G}_{k}\ar[u]_{\phi_k}.
  }$}

Similarly, following Remark \ref{dynrel}, we define a map $F_{k+1}:\mathcal{L}_{k+1}\to \mathcal{L}_k$. 
We have defined for every $k\geq0$ a map $$F_{k+1}:\mathcal{G}_{k+1}\sqcup\mathcal{L}_{k+1}\to \mathcal{G}_k\sqcup \mathcal{L}_k.$$

\begin{definition}
The dynamical escaping lamination is the family $(\mathcal{L}_k,F_k)_{k\in\Z}$.
\end{definition}



\subsection{Escaping tree and escaping lamination.}

In this subsection we prove the following theorem:

\begin{proposition}\label{treelam}
The escaping lamination associated to a polynomial determines its escaping tree.
\end{proposition}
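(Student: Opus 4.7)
The plan is to give a combinatorial recipe that, starting only from the dynamical escaping lamination $(\mathcal{L}_k,F_k)_{k\in\Z}$, reconstructs an abstract dynamical tree isomorphic to the escaping tree of $P$. The central tool is the bijection $\phi_k:\mathcal{G}_k\to B_k$ together with the natural inclusions $\mathcal{L}_k\subset\mathcal{L}_{k+1}$ and $\mathcal{G}_k\subset\mathcal{G}_{k+1}$: these encode how the shells $\Omega_k\setminus\overline{\Omega_{k-1}}$ decompose into pieces and how these pieces are stacked in the basin of infinity, which is exactly the data one collapses to obtain the escaping tree.

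First I would reconstruct the underlying tree $T$. At each level $k$, the set $\mathcal{G}_k$ labels the components of $\Omega_k\setminus\overline{\Omega_{k-1}}$ via $\phi_k$. Whenever $\gamma_{k+1}\in\mathcal{G}_{k+1}$ is contained in $\gamma_k\in\mathcal{G}_k$ inside $\D$, the corresponding pieces $\phi_{k+1}(\gamma_{k+1})$ and $\phi_k(\gamma_k)$ share a common Green-level boundary component, and this is the only way such a sharing can occur. I then form a graph $T$ whose vertex set is $\bigsqcup_k\mathcal{G}_k$ and whose edges join $\gamma_k$ to each child $\gamma_{k+1}\subset\gamma_k$. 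The ends of $T$, indexed by nested decreasing sequences of gaps, correspond bijectively to the connected components of $K_P$, and the vertices where $T$ branches are exactly the gaps whose boundary carries a class of ${\bf \Theta}^k_P\setminus {\bf \Theta}^{k-1}_P$, i.e.\ a ``new'' critical portrait element at level $k$. Collapsing every edge whose endpoints both have valence $2$ yields the De Marco-McMullen escaping tree.

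The dynamics on $T$ is then induced directly by the maps $F_k$: the commutative square defining $F_k$ in terms of $\phi_k$ and $P$ transports the action of $P$ on the pieces $B_k$ to the abstract tree level by level, and in the direct limit this produces the full dynamical escaping tree. The simplified escaping tree of Proposition~\ref{restricyiuov} is then obtained by restricting to the convex hull of the ends visited by the critical orbit, which again is combinatorial data encoded in the lamination.

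The hard part will be to verify that this combinatorial object really coincides, as a dynamical tree, with the one obtained by topologically collapsing Green-level components. I would handle this by induction on $k$: at step $k$, the finite subtree built from $\mathcal{G}_0,\ldots,\mathcal{G}_k$ must agree with the tree obtained from $\{g_P\geq e^{-1}r^{1/3^k}\}$ by collapsing each connected component of a level curve to a point. The inductive step amounts to checking that adding $\mathcal{L}_{k+1}\setminus\mathcal{L}_k$ corresponds exactly to subdividing the corresponding pieces along the next critical level of $g_P$, and this follows from the characterization of $\phi_k$ through the equivalence relation $\sim_k$ together with the fact, already used in Example~\ref{lemni}, that a non-trivial class of $\sim_k$ records precisely the angles of the generalized rays meeting a common preimage of $-a$.
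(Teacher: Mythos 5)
Your proof takes essentially the same approach as the paper's. The key fact you rely on — that $\gamma_{k+1}\subset\gamma_k$ holds precisely when $\partial\phi_{k+1}(\gamma_{k+1})\cap\partial\phi_k(\gamma_k)\neq\emptyset$, so the inclusion poset of gaps encodes the adjacency structure of the annuli $B_k$ — is exactly the observation on which the paper's proof turns, and you transport the dynamics with the same commuting squares for $F_k$.

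One cosmetic discrepancy worth flagging: the paper builds the tree with the gaps as \emph{edges} and introduces a vertex between a $k$-gap and a $(k+1)$-gap precisely when the latter is contained in the former, which directly reproduces the DeMarco--McMullen combinatorial tree (edges = annuli of level curves, vertices = level-curve components in the grand orbit of the critical level). You instead take the gaps as \emph{vertices} with edges given by the parent--child relation and then propose to collapse edges whose endpoints both have valence $2$. That collapse is not quite the right operation: the DeMarco--McMullen tree retains valence-$2$ vertices at the non-pinched level curves in the grand orbit, so the correct passage from your graph $T$ to the escaping tree is to pass to the dual tree (new vertices = edges of $T$, i.e.\ shared boundary circles; new edges = vertices of $T$, i.e.\ annuli), not to collapse. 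This does not affect the substance of the argument — the data you extract from the lamination is exactly what is needed, and it visibly determines the tree — but as a recipe for producing the escaping tree on the nose, the collapse step should be replaced by the duality.
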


 \begin{figure}
  \centerline{\includegraphics[width=10cm]{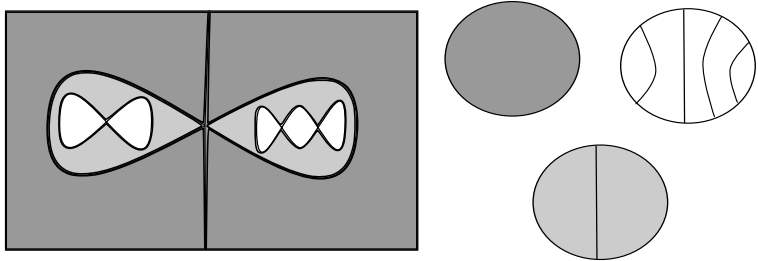}} 
   \caption{On the left, the upper vertical line is the ray $R^0_\theta$ where $\theta$ lies in the unique non trivial class of $\sim_0$. The dark grey region is $\Omega_0$. The two light grey regions corresponds to the two elements of $B_0$. On the right the corresponding laminations $\mathcal{L}_k$ for $k=-1,0,1.$}
\label{lamin} 
\end{figure}

We remark that ${\bf\mathcal{G}}$ is a filtered set with respect to the inclusion order, i.e. 
\begin{itemize}
\item it is a partially ordered set, and
\item if $\gamma_1,\gamma_2\in {\bf\mathcal{G}}$ then there exists $\gamma_3\in {\bf\mathcal{G}}$ such that $\gamma_1\subset\gamma_3$ and $\gamma_2\subset\gamma_3$.
\end{itemize}
So this set has a natural associated tree and this one has a dual tree. More precisely, the edges of this tree are the elements of ${\bf\mathcal{G}}$ and there is a vertex between two edges if and only if there exists $k$ such that the gaps are a $(k+1)$-gap and a $k$-gap and the first one is included in the other. 

Such a tree is the combinatorial escaping tree associated to this polynomial. Indeed, in one hand, the elements of the $B_k$ are annuli constituted by a disjoint union of circles which are connected components of levels curves of the Green function. On the other hand, by construction, for $\gamma_k\in \mathcal{G}_k$ and $\gamma_{k+1}\in \mathcal{G}_{k+1}$, we have ${\partial \phi_k(\gamma_k)\cap\partial \phi_{k+1}(\gamma_{k+1})\neq\emptyset}$ if and only if $\gamma_{k+1}\subset\gamma_{k}$.

We define the dynamics on the tree according to the maps $F_k$. With the identification of the escaping tree, it is clear that the dynamics on the two trees correspond.



 \begin{figure}
  \centerline{\includegraphics[width=7cm]{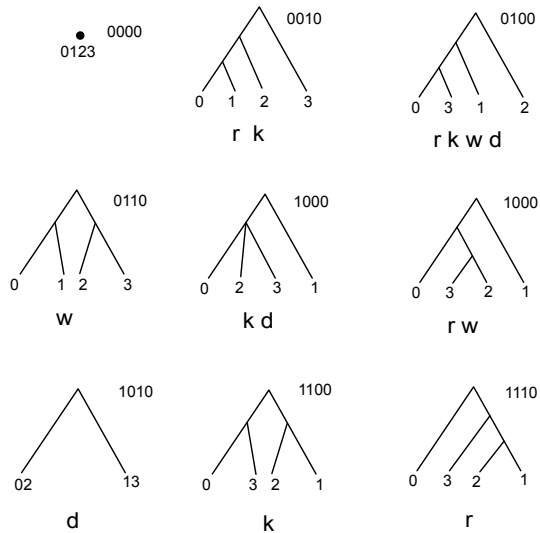}} 
   \caption{All possible simplified escaping trees for polynomials in $\St_4$ with their kneading sequence. We nicknamed four trivial escaping components as in \cite{CubPol2} (k for kokopelli, w for worm, d for  double-basilica and r for 1/4-rabbit). Below each non trivial tree we wrote the nickname of each of these trivial escape components at its boundary.}
\label{class4} 
\end{figure}

\begin{definition}
The simplified dynamical escaping lamination is a escaping lamination for which we just consider the forward dynamic of the gaps containing the critical points.
\end{definition}

Then from Proposition \ref{restricyiuov} we deduce:

\begin{corollary}\label{reconst}
Given a simplified escaping lamination associated to a polynomial, we can reconstruct its dynamical escaping lamination.
\end{corollary}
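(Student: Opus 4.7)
The plan is to chain Propositions \ref{treelam} and \ref{restricyiuov}: starting from the simplified escaping lamination, extract the simplified escaping tree, invoke Proposition \ref{restricyiuov} to recover the full escaping tree with its dynamics, and then invert the tree-from-lamination construction to reach the full dynamical escaping lamination.

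For the first step, the argument in the proof of Proposition \ref{treelam} exhibits a dynamical escaping lamination and its associated escaping tree as essentially dual data: edges of the tree are the gaps $\gamma_k\in\mathcal{G}_k$, two edges are adjacent precisely when one is a $(k{+}1)$-gap included in a $k$-gap, and the tree dynamics is inherited from the maps $F_{k+1}$. Applying the same duality only to the gaps that contain points of the critical orbit yields the simplified escaping tree together with its restricted dynamics. The second step is Proposition \ref{restricyiuov}, used as a black box.

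The main obstacle is the third step: inverting the lamination-to-tree map to recover the critical portraits ${\bf \Theta}^k_P\subset\R/2\pi\Z$ from the combinatorial tree. I would argue as follows. The bounding angles of each critical gap are given as part of the data of the simplified escaping lamination. For any other gap $\gamma$ in the full tree, iterating the tree dynamics eventually lands $\gamma$ in a gap along the critical orbit (each iteration of $F$ lowers the filtration level by one, and at level $0$ the picture is fully controlled by the single non-trivial class of $\sim_0$ described in Example \ref{lemni}). Pulling back the known angles of the target critical gap under multiplication by $3$ along this orbit, and using the combinatorial incidences of the full tree to select the correct tripling preimage at each step (the three tripling-preimages of a $k$-class angle are distinguished by which $(k{+}1)$-gap they bound), reconstructs every class in ${\bf \Theta}^k_P$. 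Once this inverse is verified, Corollary \ref{reconst} follows from concatenating the three steps.
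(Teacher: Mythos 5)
Your three-step chain is a faithful unpacking of the paper's argument, which is literally the one-sentence deduction ``from Proposition~\ref{restricyiuov} we deduce.'' Steps 1 and 2 are exactly what is implicit there: the duality described in the proof of Proposition~\ref{treelam} turns the simplified lamination into the simplified tree, and Proposition~\ref{restricyiuov} recovers the full tree. Step 3 is where you go beyond the paper; the paper never spells out a ``tree plus critical angles $\Rightarrow$ full lamination'' direction, and indeed it does not need it for its downstream uses. The first corollary after \ref{reconst} only chains \ref{treelam} and \ref{reconst} to obtain the escaping \emph{tree}, which already follows from your steps 1--2 alone; Proposition~\ref{corolam} builds its lamination directly from the perturbation rather than from \ref{reconst}. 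So the paper likely intends \ref{reconst} in a weaker, tree-level sense and treats the lamination statement as the obvious transcription of the \cite{DP4} argument.

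Two small imprecisions in your step 3 worth tightening. First, ``iterating the tree dynamics eventually lands $\gamma$ in a gap along the critical orbit'' is not quite right: iteration of the $F_k$ drops the level to $0$, where there are two $0$-gaps $V_0$ and $V_1$, of which only $V_0$ meets the bounded critical orbit. The argument still works, since both $V_0$ and $V_1$ are bounded by the same unique nontrivial $\sim_0$ class of Example~\ref{lemni}, so their boundary angles are known anyway; but the statement should be phrased as landing at level $0$ rather than on the critical orbit. Second, the assertion that ``the three tripling-preimages of a $k$-class angle are distinguished by which $(k{+}1)$-gap they bound'' deserves a bit more justification, since which preimages constitute genuine $\sim_{k+1}$-classes is precisely part of what you are reconstructing; a cleaner route is to induct on $k$, observe that each new $\mathbf{\Theta}^{k+1}$ class is a leaf of $\mathcal{L}_{k+1}\setminus\mathcal{L}_k$ lying inside a single already-determined $k$-gap, and then use the cyclic order of the known boundary angles of that gap together with the incidence pattern of its $(k{+}1)$-children in the tree to pin down which preimages are inserted where. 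With those adjustments your proposal is a sound, more explicit version of the argument the paper merely gestures at.
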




\subsection{Kneading sequence.}

We recall the natural kneading sequence associated to a polynomial $P\in{\cal E}$, already introduced in \cite{CubPol1} and \cite{CubPol2}. The Green's function level curve containing the escaping critical point is topologically a Bernoulli's lemniscate. The Julia set is contained in the bounded connected components of its complementary. Label by $V_0$ the one that contains the other critical point and by $V_1$ the other. There is a natural labeling of the connected components of the filled Julia set according the itinerary of its elements with respect to this partition.

Let $K$ be the connected component of $K_P$ containing the critical $a$. We define the \emph{kneading sequence} $(k_j)_{j\in\N}$ of $P$  by $k_j=i$ if and only if $P^{j}(K)\subset V_i$. As $P\in\St_n$ this sequence is periodic with exact period dividing $n$. We simplify the notation writing only the $n$ first elements of this sequence.
 
The kneading sequence can be read on the escaping tree. Indeed, it follows from the previous remarks that the escaping tree minus the vertex corresponding to the Green's function level curve containing the escaping critical point is a disjoint union of three branches, two of them correspond exactly to the level curves included in $V_0$ and $V_1$. As $V_0$ and $V_1$ are in natural correspondence with the 0-gaps, we can do a similar remark for the escaping lamination and polynomials lying in the same escape component have the same kneading sequence.

\begin{example}When $n=4$, we can have three different periods $p$ for the kneading sequence which implies three different behavior for the first return map in the connected component of the filled Julia set of the periodic critical point:
\begin{itemize}
\item $p=4$ then the return map fixes the critical point,
\item $p=2$ then the return map has a critical point of period 2, or
\item $p=1$ then the return map has a critical point of period 4.
\end{itemize}
\end{example}

In \cite{CubPol2}, the authors remarked that the kneading sequence is not enough to characterize the basin of infinity of polynomials in $\St_4$. In Figure \ref{class4}, we see that there are indeed two trees with the same kneading sequence.


\subsection{Hubbard Tree and Labeling escaping components.}

Let $P$ be a polynomial in $\St_n$ with disconnected Julia set. Suppose that the connected component of $K_P$ containing $a$ has period $k\leq n$. It follows form \cite{BH1} and \cite{BH2} that for a well chosen neighborhood of $a$, the restriction $P^k$ is a polynomial-like map of degree 2.  There is a unique representative of its hybrid class in the family $p_c:z\mapsto z^2+c$ with $c\in\C$ and this one has a critical point of exact period $n/k$. We call this representative the \emph{connected restriction} of $P$, it is an invariant for the topological conjugacy class of $P$, ie every polynomial in the same escape component of $\St_n$ have the same connected restriction. Hence, we will talk about the connected restriction of an escape component.

Let $p_c$ be such a connected restriction. It has a periodic Fatou component $U$ containing its unique critical point and every other bounded Fatou component eventually maps on this cycle. It follows that in $U$ there is a well defined Bötcher coordinate and dynamical rays that can be pulled back to its backward orbit. 

The convex-hull of the critical orbit inside the filled Julia set crossing the Fatou components through dynamical rays form a topological tree that characterize $p_c$ called its Hubbard tree (cf \cite{Orsay} and \cite{Poirier}). We will call it the Hubbard tree of the escape component.

Then, as explained in $\cite{DS2}$ \textsection5.3 for example, we can state the following: 

\begin{proposition}\label{BHclass}
The topological conjugacy class of an escape component of $\St_n$ is determined by 
\begin{itemize} 
\item the topological conjugacy class of the basin of infinity and
\item its Hubbard tree. 
\end{itemize}
\end{proposition}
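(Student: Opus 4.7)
The plan is to show that if $P_1, P_2 \in \St_n$ lie in escape components and share both the topological conjugacy class of their basin of infinity and their Hubbard tree, then they are topologically conjugate as maps of $\C$. I would construct the conjugacy $h: \C \to \C$ piece by piece and glue. The natural decomposition is $\C = \Omega_\infty(P_i) \sqcup K_{P_i}$, and within $K_{P_i}$ the grand orbit of the periodic component $K^{(i)}$ containing the periodic critical point $a$ is distinguished, with every other component eventually falling into this grand orbit under iteration.

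On $\Omega_\infty$, a conjugating homeomorphism is provided directly by the first piece of data. On the periodic component $K^{(i)}$, Branner--Hubbard straightening \cite{BH1}, \cite{BH2} realizes $P_i^k$ restricted to a suitable neighborhood as a degree-two polynomial-like map hybrid equivalent to a quadratic $p_{c_i}$. Since the Hubbard trees of $p_{c_1}$ and $p_{c_2}$ coincide, the classical Douady--Hubbard/Poirier classification \cite{Orsay}, \cite{Poirier} provides a topological conjugacy between $p_{c_1}$ and $p_{c_2}$, which transfers back to a conjugacy $K^{(1)} \to K^{(2)}$ of the first-return dynamics. The dynamics of $P_i$ then propagate this conjugacy equivariantly along the grand orbit of $K^{(i)}$ by lifting; the remaining components of $K_{P_i}$ are handled by pullback once some iterate maps them into the grand orbit.

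The main obstacle is the gluing step: the two conjugacies, one on the basin and one on the filled Julia set, must match continuously along their common boundary. The boundary of each component of $K_{P_i}$ meets $\Omega_\infty(P_i)$ along the landing points of a specific collection of external rays, and for a global homeomorphism the angles landing at corresponding boundary points must agree under the two prescriptions. The basin conjugacy specifies one matching of angles at $\partial K^{(i)}$, while the internal B\"ottcher coordinates on the Fatou components of $p_{c_i}$ together with the Hubbard tree specify the matching from the other side. Establishing compatibility of these two matchings, via a careful bookkeeping of ray portraits and periodic landing combinatorics, is precisely the content of the argument in \cite{DS2} \textsection 5.3; once it is in place, a standard pasting lemma in the style of Branner--Hubbard assembles the local conjugacies into the required global topological conjugacy $h$ and completes the proof.
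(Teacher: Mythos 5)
The paper gives no proof of this proposition; it simply cites \cite{DS2} \textsection 5.3 and moves on. Your outline is a reasonable reconstruction of the argument behind that citation, and you correctly identify the genuine difficulty (matching the basin conjugacy and the filled-Julia-set conjugacy along their common boundary) and defer it to the same reference, so this is essentially the same approach the paper relies on.
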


It follows from the work of L.De Marco and K. Pilgrim \cite{DP4}, that for $n=4$, the escaping tree is enough to characterize the topological conjugacy class of the basin of infinity so we have the following result :

\begin{proposition}\label{DPclass}
For $n=4$, every escape component is characterized by its Hubbard tree together with its escaping tree.
\end{proposition}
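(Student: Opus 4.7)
The statement follows by combining two inputs: Proposition~\ref{BHclass} and a specialization of the DeMarco--Pilgrim classification of basins of infinity. The plan is therefore to reduce, via Proposition~\ref{BHclass}, the problem to showing that for $n=4$ the topological conjugacy class of the basin of infinity $\Omega_\infty$ is recovered from the escaping tree alone, and then to invoke the results of \cite{DP4}.

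First I would recall that in general the full conjugacy class of a basin of infinity of a degree-$d$ polynomial is classified by a richer combinatorial invariant than the simplified escaping tree: one needs the (dynamical) tree together with certain twisting/angle data at the branch vertices and at the escaping critical points (this is exactly the content of the DeMarco--Pilgrim normalization). The idea is then to argue that for polynomials in $\St_4$ this additional data is forced by the tree itself. Concretely, the escaping critical point $-a$ must be located on the tree in a way compatible with: (i) the dynamics on the ends; (ii) the constraint that $a$ lies in a finite end of the tree with itinerary a period dividing $4$; and (iii) the degree constraint around critical edges. Enumerating the possibilities in Figure~\ref{class4} shows that in each case the location of the critical grand orbit, the number of Fatou-like ends at each vertex, and the required gluing angles are the only ones compatible with the tree. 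No residual modulus or twist parameter survives.

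Given this, Corollary~\ref{reconst} and Proposition~\ref{treelam} let us pass freely between the simplified escaping tree and the full dynamical escaping lamination, so the DeMarco--Pilgrim invariant is entirely encoded in the simplified escaping tree. Applying \cite{DP4} to this specific list, the topological conjugacy class of the basin of infinity is determined by the escaping tree. Plugging this into Proposition~\ref{BHclass}, the topological conjugacy class of the escape component is determined by the pair (Hubbard tree, escaping tree), which is the conclusion.

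The main obstacle I expect is the enumeration/forcing step in the second paragraph: verifying that no hidden twisting or angular datum is needed to pin down the basin of infinity beyond the escaping tree. This is exactly where the hypothesis $n=4$ is used in an essential way, since the analogous statement can fail for larger $n$ (as the authors hint at in the introduction, where they note that the labeling stops being one-to-one for $n\geq 5$); the check amounts to going through the finite list of trees in Figure~\ref{class4} and ruling out nontrivial moduli by hand.
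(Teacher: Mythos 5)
Your proposal is correct and follows essentially the same route as the paper: reduce via Proposition~\ref{BHclass} to classifying the basin of infinity, then invoke \cite{DP4} for the statement that when $n=4$ the escaping tree suffices to pin down the topological conjugacy class of the basin. The paper simply cites \cite{DP4} for this step without spelling out the verification that no residual twist or angular data survives, which you expand on; that expansion is reasonable but is not a different argument.
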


Thus it follows from Proposition \ref{treelam} and Corollary \ref{reconst} that:
\begin{corollary}
The escape component of any escaping polynomial in $\St_4$ is characterized by its Hubbard tree together with its simplified escaping lamination.
\end{corollary}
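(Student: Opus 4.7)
The proof is essentially a short chain of implications built from the three results developed in this section, so the plan is just to assemble them in the right order and verify there is nothing missing along the way.

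First, I would start from the data of the simplified escaping lamination of a polynomial $P\in\St_4$ in an escape component. By Corollary \ref{reconst}, this data determines the full dynamical escaping lamination $(\mathcal{L}_k,F_k)_{k\in\Z}$: the non-forward orbit gaps and the extension of the dynamics to them can be reconstructed from the forward dynamics of the gaps containing the critical points, exactly as Proposition \ref{restricyiuov} allows one to recover the full escaping tree from its simplification. Next, I would apply Proposition \ref{treelam} to pass from the dynamical escaping lamination to the escaping tree of $P$ (with its dynamics), since that proposition asserts that the escaping lamination determines the escaping tree.

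At this point the escape component is equipped with two invariants: the Hubbard tree of the connected restriction and the escaping tree. By Proposition \ref{DPclass}, in the case $n=4$ these two pieces of data together characterize the topological conjugacy class of the escape component, hence the escape component itself. Composing the three implications yields the desired statement: the pair (Hubbard tree, simplified escaping lamination) is a complete invariant of the escape component of $P$ in $\St_4$.

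There is essentially no obstacle here beyond bookkeeping: each arrow is supplied by a previously stated result, and the only thing to check is that the simplified escaping lamination referenced in the corollary indeed corresponds, under the bijection described in Section \ref{labelle}, to the simplified escaping tree of Proposition \ref{restricyiuov} (so that Corollary \ref{reconst} applies directly). I would make that identification explicit in one sentence before stringing the implications together. The only place where one might worry is the dynamical part of the escaping tree (as opposed to its combinatorial structure), but Proposition \ref{treelam} already packages this, so no additional work is needed.
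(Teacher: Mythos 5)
Your chain of implications — simplified escaping lamination $\Rightarrow$ full dynamical escaping lamination (Corollary \ref{reconst}) $\Rightarrow$ escaping tree (Proposition \ref{treelam}), then combined with the Hubbard tree via Proposition \ref{DPclass} — is exactly the argument the paper itself uses (indicated by the phrase ``Thus it follows from Proposition \ref{treelam} and Corollary \ref{reconst} that'' preceding the corollary). Your extra sentence pinning down that the simplified lamination corresponds to the simplified tree under the bijection of Section \ref{labelle} is a reasonable small addition but does not change the route.
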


In the general period case, the situation is more complicated and in the same paper they introduced various combinatorial tools to try to encode the topological conjugacy class of the basin of infinity. This will be discussed later at the end of section \ref{reading}.
Figure \ref{class4} represents all the possible simplified trees for polynomials in $\St_4$. On that figure we recalled the corresponding kneading sequences.




\section{Parabolic polynomials and controlled perturbations}\label{Chap3}

\subsection{Rays and parameter space.}

In \cite{CubPol1}5.9, J.Milnor proves that for every escape component ${\cal E}$, the map $\Phi:{\cal E}\to\C\setminus \D$ defined by $\Phi(a,b)=\phi_{P_{a,b}}(2a)$ is a covering. Hence escape components are conformaly a punctured disc and there is a well defined notion of equipotentials and external angles.

The consequence on the dynamics of the polynomials of moving along these curves is well understood, it corresponds for external angles to "stretch" the dynamic on the basin of infinity and for equipotential to twist the dynamics in the annulus between the Green level curves of the escaping critical point and of the critical value. Hence in the first case, the argument of $\phi_{P_{a,b}}(2a)$ stay unchanged whereas its modulus is fixed in the second case. We deduce the following lemma:

\begin{lemma}\label{sameangle}
Any path $(a_t,b_t)$ inside an escape component on which the argument of $\phi_{P_{a_t,b_t}}(2a_t)$ is constant is included in an external angle.
\end{lemma}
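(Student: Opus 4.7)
The plan is to deduce the statement directly from the covering property of $\Phi$ recalled from \cite{CubPol1}. By definition, the external ray of angle $\theta_0$ in ${\cal E}$ is one of the connected components of $\Phi^{-1}(R_{\theta_0})$, where $R_{\theta_0}:=\{\rho e^{i\theta_0}:\rho>1\}\subset\C\setminus\overline{\D}$; these preimage components are the analogues, in the parameter space, of the dynamical rays of a fixed angle.

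First I would observe that the hypothesis on the path $(a_t,b_t)$ says exactly that $\Phi(a_t,b_t)\in R_{\theta_0}$ for every $t$, so the image path $t\mapsto\Phi(a_t,b_t)$ is contained in a single radial ray, and the path $(a_t,b_t)$ itself is contained in $\Phi^{-1}(R_{\theta_0})$.

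The key step is then the following: since $R_{\theta_0}$ is a simply connected (in fact contractible) $1$-dimensional submanifold of $\C\setminus\overline{\D}$ and $\Phi$ is a covering, $\Phi^{-1}(R_{\theta_0})$ is a disjoint union of open arcs, each mapped homeomorphically onto $R_{\theta_0}$ by $\Phi$. These components are by definition the external rays of angle $\theta_0$ in ${\cal E}$. Because $t\mapsto(a_t,b_t)$ is a continuous path, its image lies in a single connected component of $\Phi^{-1}(R_{\theta_0})$, i.e.\ in a single external ray of angle $\theta_0$.

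I do not expect any serious obstacle here: once the covering $\Phi$ and the definition of external rays in ${\cal E}$ are in place, the statement is essentially a repackaging of the unique path-lifting property. The only point that deserves mention is that the hypothesis only constrains the argument of $\Phi(a_t,b_t)$ and not its modulus, but this is precisely what one wants, since points on a single external ray are distinguished by their equipotential level.
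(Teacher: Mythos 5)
Your proof is correct and follows the same route the paper takes: the lemma is stated as a direct consequence of Milnor's result (\cite{CubPol1}\,5.9) that $\Phi:{\cal E}\to\C\setminus\overline{\D}$, $\Phi(a,b)=\phi_{P_{a,b}}(2a)$, is a covering, and you unwind that deduction via the standard covering-space fact that the preimage of the simply connected set $R_{\theta_0}$ splits into disjoint slices, each mapped homeomorphically, so a connected path with constant argument of $\Phi$ stays in one slice. The paper itself gives no separate proof block; it reaches the lemma through an informal description of how moving along external angles ``stretches'' the dynamics while keeping $\arg\phi_{P_{a,b}}(2a)$ fixed, and how moving along equipotentials ``twists'' while fixing the modulus. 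Your version is a cleaner, more precise formulation of the same covering/path-lifting reasoning and fills in exactly the step (connectedness of the path forces a single component of $\Phi^{-1}(R_{\theta_0})$) that the paper leaves implicit.
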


Usually the argument of the cocritical point of a polynomial is not constant after perturbation. However there are properties about angles that stay unchanged in a neighborhood of a polynomial. Indeed, recall that we say that an (extended) dynamical ray $(\phi_P^{-1}(re^{i\theta}))_{r>0}$ lands to a point $z_0$ of the Julia set if 
we have $$z_0=\lim_{r\to0}\phi_P^{-1}(re^{i\theta});$$ we have the following result:

\begin{lemma} \cite{GM}\label{Stability}B1.
For any polynomial $P$, suppose that $z_0$ is a repelling fixed point of $P$, and suppose that some rational external ray $(\phi_P^{-1}(re^{i\theta}))_{r>0}$ lands at $z_0$. Then for any $Q$ sufficiently close to $P$, the corresponding ray $(\phi_{Q}^{-1}(re^{i\theta}))_{r>0}$ lands at the corresponding fixed point of $Q$.
\end{lemma}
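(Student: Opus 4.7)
The plan is to combine the local stability of repelling periodic points with the continuous dependence of the B\"ottcher coordinate. First, by passing to an iterate if necessary, I would reduce to the case where $\theta$ is fixed by multiplication by $\deg P$, so that $P$ maps the ray $R_\theta$ to itself and fixes $z_0$; the general periodic case follows by replacing $P$ with $P^k$ for $k$ the period of $\theta$, and a preperiodic rational ray lands on an iterated preimage of a periodic landing. By the implicit function theorem applied to $P(z)-z=0$ at the simple root $z_0$ (simple because $P'(z_0)=\lambda$ with $|\lambda|>1$), for every $Q$ sufficiently close to $P$ there is a unique fixed point $z_0(Q)$ near $z_0$, depending holomorphically on $Q$, with multiplier close to $\lambda$ and hence still repelling.

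Next I would analyze the picture near $z_0$. Koenigs' linearization provides a biholomorphism $\psi_P$ from a small neighborhood $U$ of $z_0$ onto a disk, conjugating $P|_U$ to $w\mapsto \lambda w$. The initial segment $R_\theta\cap U$ is an arc invariant under the contracting inverse branch of $P$ fixing $z_0$; in the linearizing coordinate it becomes an arc invariant under $w\mapsto w/\lambda$ landing at $0$. The linearizing coordinates $\psi_Q$ depend continuously on $Q$, as does the contracting inverse branch $g_Q$ of $Q$ fixing $z_0(Q)$. By the same Koenigs argument an invariant arc for $g_Q$ landing at $z_0(Q)$ exists and deforms continuously with $Q$; I call its image in the dynamical plane the \emph{local candidate ray} for $Q$.

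Finally I would match this local candidate to the globally defined ray $R_\theta(Q)=\phi_Q^{-1}(\{re^{i\theta}:r>r_Q\})$. The B\"ottcher coordinate $\phi_Q$ depends holomorphically on $Q$ on regions of sufficiently high potential, so for any fixed potential $G_0$ the arc $\phi_Q^{-1}(\{re^{i\theta}:\log r\ge G_0\})$ varies continuously with $Q$. One extends this arc to lower potentials by applying the inverse branches of $Q$ dictated by the dynamics of $R_\theta$. Because only finitely many pullbacks are needed to enter a fundamental annulus of the expansion near $z_0$, and because the relevant segment of $R_\theta(P)$ meets no critical point of $P$, a compactness-and-openness argument shows the same remains true for $Q$ sufficiently close to $P$. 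Combining the pieces, $R_\theta(Q)$ extends through every potential level into $U_Q$, where by uniqueness of the invariant arc of the contracting branch it must coincide with the local candidate and hence land at $z_0(Q)$. The main obstacle is precisely this last matching step: ruling out that a critical point of $Q$ drifts onto the ray during perturbation and bifurcates it. This is handled by the finite-cover-plus-openness argument just sketched, but making it fully rigorous requires a careful choice of a compact annular shell of potentials between $G_0$ and the first iterate at which $R_\theta(Q)$ enters the linearizing neighborhood $U_Q$.
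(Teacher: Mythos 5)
The paper does not prove this lemma itself: it is quoted verbatim from Goldberg--Milnor \cite{GM}, Appendix~B, Lemma~B.1, and the citation is the whole of the paper's treatment. So there is no in-paper proof to compare against. That said, your sketch reproduces the standard argument used in \cite{GM} and in Douady--Hubbard: locate the perturbed repelling point $z_0(Q)$ by the implicit function theorem, linearize by Koenigs to get a contracting inverse branch $g_Q$ with a continuously varying invariant arc, use the holomorphic dependence of the B\"ottcher coordinate on $Q$ at high potential, and pull back a finite number of times to match the global ray with the local invariant arc. You also correctly flag the genuine technical content: the finite piece of $R_\theta(P)$ between the potential level $G_0$ and the linearizing disc $U$ must be shown to persist for $Q$ near $P$ without being hit by an escaping critical point of $Q$. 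The resolution is exactly the compactness-plus-openness argument you describe: that piece is a compact arc avoiding the (closed) set of critical and cocritical preimages at the relevant potential levels, so a fixed positive distance to them survives small perturbations, and once the arc lands inside $U_Q$ the contraction of $g_Q$ forces convergence to $z_0(Q)$. Two small remarks. First, your side comment about reducing via $P^k$ is only needed because the angle $\theta$ (not the point $z_0$) may have period $k>1$ under angle multiplication; since $z_0$ is already fixed by $P$, every rational ray landing at it is periodic, so the ``preperiodic'' subcase you mention does not actually arise in the statement as given. Second, when you invoke continuity of $\phi_Q$ with $Q$, the right framing is that $\phi_Q^{-1}$ is defined and holomorphic in $(Q,w)$ on $\{|w|>R\}$ for $R$ large uniformly in a neighborhood of $P$, and extension to lower potentials proceeds by a fixed finite chain of inverse branches; stating it this way makes the openness of the non-bifurcation condition transparent. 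With those clarifications your argument is the correct and essentially complete one.
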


Remark that this lemma does not require any condition on the polynomial, i.e. $P$ can lie in the connectedness locus. Here is the main tool of our paper that allows us to go from one escape component to an other one but keeping informations when we cross ${\cal C}$.

\subsection{Lamination of a perturbation.}\label{sectionparab}

We are interested in polynomials in $\St_n$ which have a parabolic periodic point. We will say that they are \emph{parabolic polynomials}.
In this section $P$ is such a polynomial. We are going to define some basic vocabulary and some lamination associated to $P$. Later, this lamination will be recognized as the escaping laminations of some polynomials close to $P$.

The polynomial $P$ has exactly one super-attractive cycle of Fatou components and one parabolic one. Each of these cycles contains exactly one critical point  (cf \cite{DynInOne} for example). Denote by $V_{-a}$ the Fatou component containing $-a$. 
A dynamical ray of $P$ is said \emph{parabolic} if it lands at a parabolic periodic point in the boundary of $V_{-a}$. These ray are said \emph{perturbable} if they are adjacent to $V_{-a}$ or adjacent to these ones (see picture \ref{Dyn1}).

The boundary of $V_{-a}$ contains a parabolic point and one pre-image by the first return map on this component. This latter point which is uniquely defined will be called the \emph{co-parabolic point}.
 Given a parabolic ray $\theta$, there exists always a unique ray landing at the co-parabolic point mapped to the same image as ${\cal R}_\theta$. Inspirited by the position of this ray, we will call this ray the \emph{symmetric ray} of $\theta$ and denote its angle by $\overline \theta$ (it follows that $\overline \theta=\theta±1/3$).

Fix some parabolic ray $\theta$. We define by induction a family $(\sim_k)_{k\in \Z}$ of equivalence relations on $\Q/\Z$.
For $k<0$, define $\sim_k$ to be a trivial relation. Define $\sim_0$ to be $\sim_{-1}$ on which we add the relation $\theta\sim_0 \overline \theta$ and $\overline \theta\sim_0  \theta$. Suppose that $\sim_{j-1}$ is well defined. We define $\sim_j$ to be $\sim_{j-1}$ on which we add the relations $\theta_0\sim_j\theta_1$ if and only if $2\theta_0\sim_{j-1}2\theta_1$ and ${\cal R}_{\theta_0}$ and ${\cal R}_{\theta_1}$ land at the boundary of a same Fatou component.

We denote by ${\bf \Theta}_k^\theta$ the set of non trivial classes of $\sim_k$.
\begin{definition}[Lamination of perturbation]
  Given any $k\in\Z$ and for ${\bf \Theta}^k_P=\{\Theta_1,\ldots,\Theta_p\}$, we define the $k$-lamination $\mathcal{L}^\theta_k$ of the perturbation through $\theta$
to be the subset of ${\C}$ obtained as the union of the convex hull of ${\Theta_i \subset \R/2\pi\Z \equiv \partial \D}$ inside $\D$.

We call a $k$-gap  a connected component of the complement of $\mathcal{L}^\theta_k$ in $\D$ and denote by $\mathcal{G}^\theta_k$ the set of $k$-gaps.
\end{definition}

From the definition of the equivalence classes $\sim_k$, there is a natural map $F_{k+1}:\mathcal{L}^\theta_{k+1}\to \mathcal{L}^\theta_k$ that maps a class of ray to the other if and only if the polynomial $P$ does the same for the corresponding set of dynamical rays.

As in section \ref{labelle} there is a natural map $\phi^\theta_k$ that associate to every gap a subset of $\C$. Consider a gap $\gamma_k\in\mathcal{G}_{k}$. The intersection $\overline \gamma_k \cap\partial \D$ consists of a finite union of segments $I_1,\ldots,I_p$. Define $b_k:=\phi^\theta_k(\gamma_k)$ to be the reunion of the rays of angles lying in $\mathring I_1,\ldots,\mathring I_p$. Define $B_k$ to be the union of the $b_k$ for all the $\gamma_k\in\mathcal{G}^\theta_k$. We define $F_{k+1}$ such that the following diagram commutes:

 \centerline{$\xymatrix{
    B_{k+1} \ar[r]^P   & B_k  \\
   \mathcal{G}^\theta_{k+1} \ar[u]^{\phi^\theta_{k+1}}\ar[r]^{F_{k+1}} & \mathcal{G}^\theta_{k}\ar[u]_{\phi_k^\theta}.
  }$}

Again we have defined for every $k\in\Z$ a map $$F_{k+1}:\mathcal{G}_{k+1}^\theta\sqcup\mathcal{L}_{k+1}^\theta\to \mathcal{G}^\theta_k\sqcup \mathcal{L}^\theta_k.$$

\begin{definition}
The family $(\mathcal{L}^\theta_k,F_k)_{k\in\Z}$ is called the dynamical lamination of the perturbation through $\theta$.
\end{definition}


\begin{figure}[h]
\begin{minipage}[c]{.45\linewidth}
\begin{center}

\includegraphics[width=6cm]{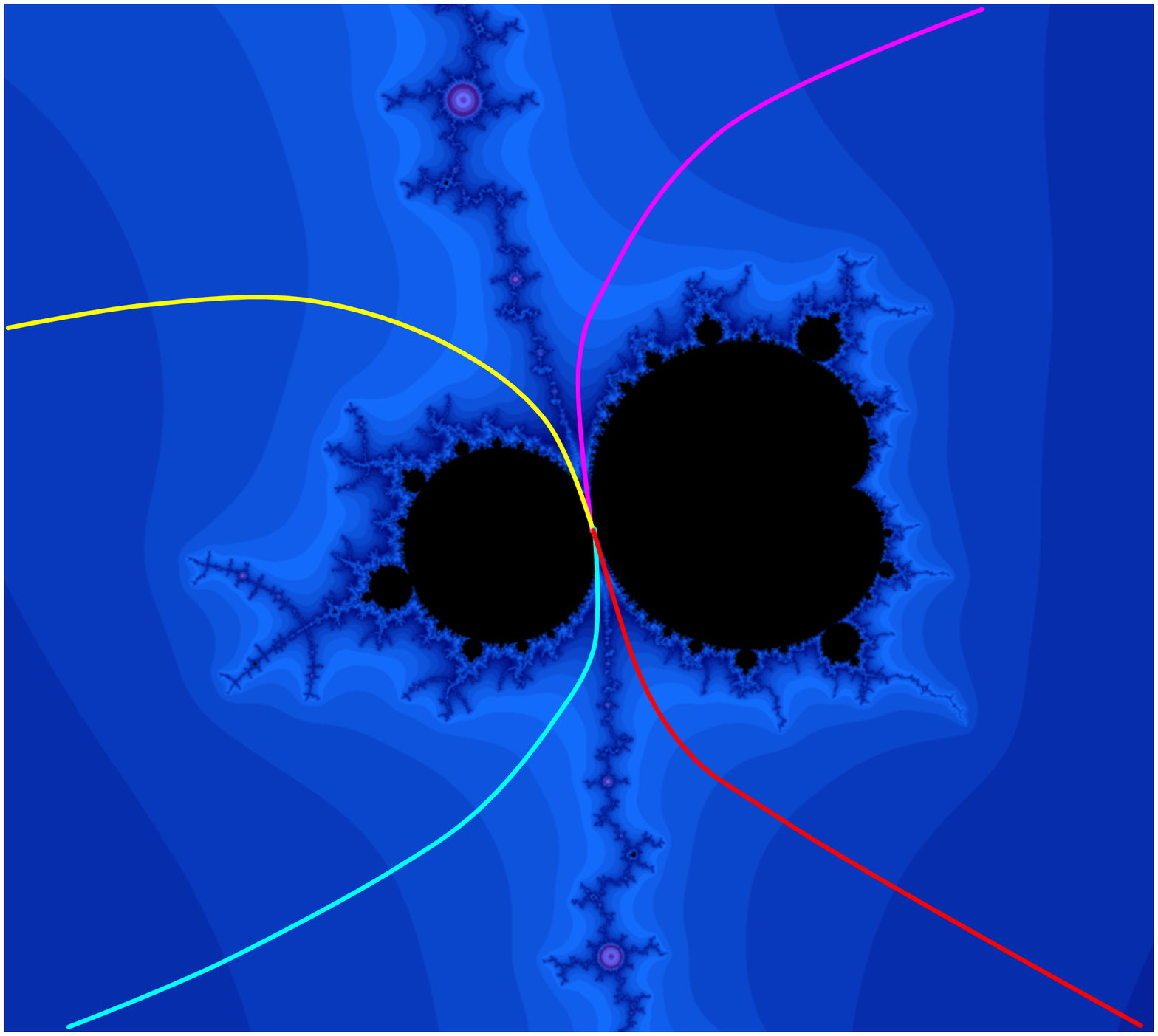}  \caption{A detail of $\St_3$. The escape locus is in blue and ${\cal C}$ in black separating locally two escape components. We colored four different external rays landing to the same points which is an intertwining.}\label{Parm1}

\end{center}
\end{minipage}
\hfill
\begin{minipage}[c]{.45\linewidth}
\begin{center}

\includegraphics[width=6.5cm]{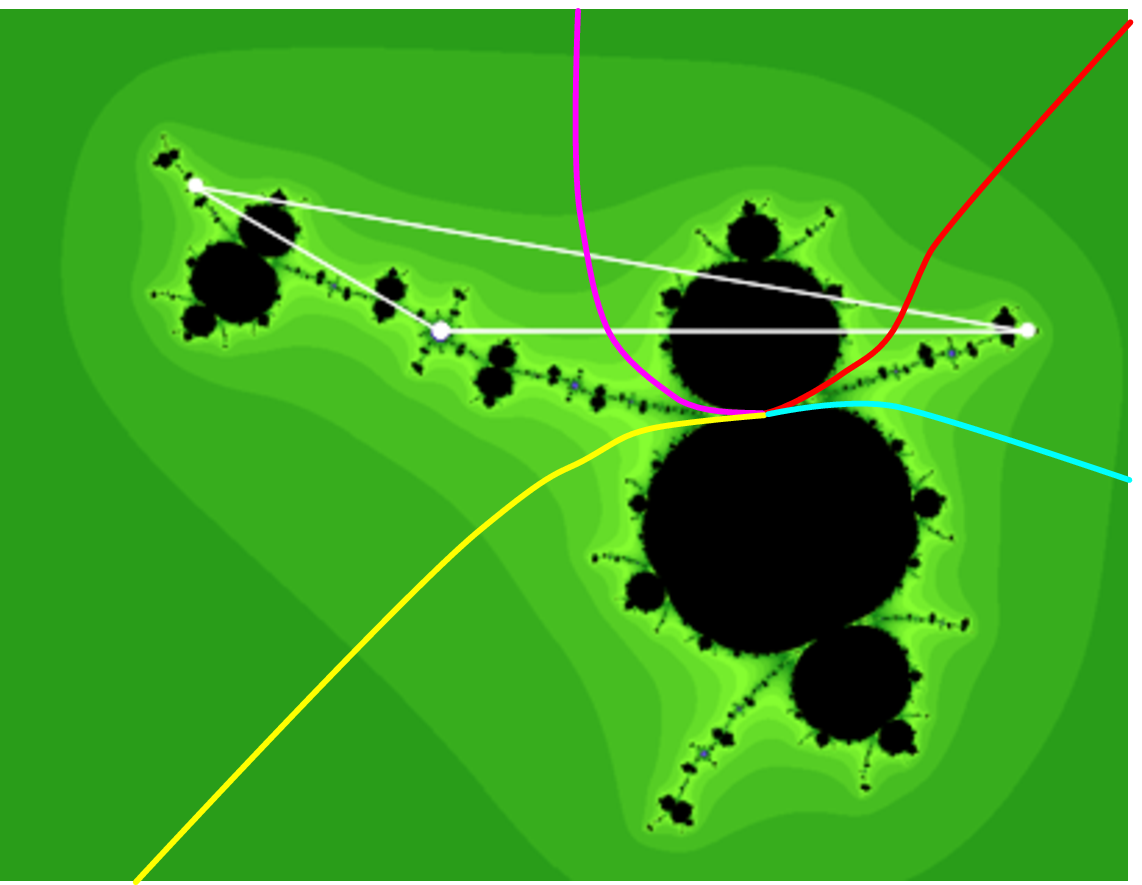}  \caption{The dynamical plane corresponding to the intertwining of Figure \ref{Parm1}. The Julia set is in black. We draw in color the different perturbable rays and in white the periodic critical orbit. Each access allows a perturbation which corresponds to an external angle with the respective color on Figure \ref{Parm1}. }\label{Dyn1}

\end{center}
\end{minipage}
\end{figure}


\subsection{Perturbation.}

In this section we show that we can find the component containing the polynomials obtained by a perturbation of an intertwining. For this we have to provide their escaping lamination (Proposition \ref{corolam}) and their Hubbard tree (Proposition \ref{mainlemma}).

We admit the following theorem:

\begin{theorem}\cite{C2}\label{Cui}
Given a cubic polynomial $P$ with a periodic critical point whose direct basin contains a parabolic point with $k$ accesses to infinity. For every perturbable parabolic ray $\theta$, there exists a canonical path $P_t$ in $\St_n$ such that 
\begin{itemize}
\item $P_0=P$;
\item for $t\neq 0, P_t\notin{\cal C}$;
\item for $t\neq 0, P_t$ the external rays $\theta$ and its symmetric ray $\overline\theta$ are the critical rays.
\end{itemize}
\end{theorem}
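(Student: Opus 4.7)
The plan is to construct $P_t$ by quasiconformal surgery, using parabolic implosion at the cycle of $-a$ to match the deformation direction with the chosen perturbable ray $\theta$. Since $\St_n$ is smooth at $P$ by \cite{CubPol1}, any holomorphic arc through $P$ inside $\St_n$ is determined by one complex direction; the content of the theorem is that for each of the $k$ perturbable rays there is a distinguished such direction realizing the prescribed escape geometry, and that different $\theta$'s land in different escape components of $\St_n$.

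First I would set up local coordinates. Let $p$ be the parabolic periodic point on $\partial V_{-a}$, let $q$ be its period under $P^n$, and write the multiplier of $P^{nq}$ at $p$ as $e^{2\pi i \ell/m}$. Using Fatou coordinates at the parabolic cycle one identifies the $k$ accesses of the direct basin to infinity with a distinguished collection of repelling petals; each perturbable ray $\theta$ is singled out by the petal whose closure contains its landing access, and its symmetric ray of angle $\overline\theta = \theta \pm 1/3$ lands at the co-parabolic preimage on $\partial V_{-a}$.

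Next I would build a quasiregular model. On a small neighborhood $U$ of the parabolic cycle I replace $P$ by a standard parabolic-implosion adapted to $\theta$: the parabolic cycle breaks into a cycle of repelling points of combinatorial rotation $\ell/m$, and the local dynamics is bent so that the forward orbit of $-a$ exits $U$ through the petal designated by $\theta$. Outside $U$ the surgery leaves $P$ unchanged, so the superattracting cycle of $a$ is untouched. A small invariant Beltrami coefficient supported on $U$ encodes the modification; applying the Measurable Riemann Mapping Theorem straightens this quasiregular object into a holomorphic cubic polynomial $P_t$, with $t$ the implosion parameter. Since $a$ remains a superattracting critical point of exact period $n$, one has $P_t \in \St_n$.

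For the verification, $P_0 = P$ is automatic and $P_t \notin \mathcal{C}$ for $t \neq 0$ follows because the modified dynamics destroys the parabolic basin of $-a$. To identify the critical rays of $P_t$, I would apply Lemma \ref{Stability}: the rays of $P$ landing at the parabolic cycle persist as rays of $P_t$ landing at the new repelling cycle, and in particular the ray of angle $\theta$ still bounds the access through which $-a$ now escapes; its symmetric counterpart of angle $\overline\theta$ bounds the symmetric access at the co-parabolic preimage, so $\{\theta,\overline\theta\}$ is the unique nontrivial class of $\sim_0$ for $P_t$, i.e.\ the critical rays. The main obstacle I expect is the construction in Step~2: producing a local model whose post-surgery combinatorics are exactly those prescribed by $\theta$. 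Matching parabolic implosion to the combinatorial access datum in Ecalle-Voronin cylinders is delicate, and it is precisely this matching that makes the path canonical.
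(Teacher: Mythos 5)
The paper does \emph{not} prove this statement. It is introduced with ``We admit the following theorem,'' and the only justification offered is the citation \cite{C2} to Cui Guizhen's work, which the abstract and the ``About this preprint'' note both flag as \emph{ongoing} and not yet available. There is therefore no internal proof to measure your argument against; you are attempting to supply an argument that the authors themselves deferred to an external, unfinished reference.

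As a proof sketch, your outline (Fatou coordinates at the parabolic cycle, parabolic implosion matched to the access of $\theta$, a quasiregular model straightened by the Measurable Riemann Mapping Theorem, then ray stability to identify the critical class) is a reasonable guess at the structure of such a result, but it has genuine gaps and is not a proof. You identify the central one yourself: constructing a quasiregular modification whose post-surgery combinatorics are exactly those prescribed by $\theta$ is the entire content of the theorem, not a step that can be deferred. Beyond that, several assertions need justification. First, the Beltrami datum you describe is supported near a parabolic cycle where Fatou coordinates are unbounded, so it is not a compactly supported, uniformly bounded deformation in any naive sense; obtaining a continuous \emph{path} $t\mapsto P_t$ with $P_0=P$ and with holomorphic (not merely quasiconformal) parameter dependence requires uniform estimates as $t\to 0$, which you do not address. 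Second, you claim that destroying the parabolic basin forces $-a$ to escape; this does not follow automatically, since a perturbation of a parabolic cycle can equally well yield an attracting or Siegel cycle that captures $-a$, so one must show that the implosion direction picked out by $\theta$ really sends $-a$ into $\Omega_\infty$. Third, it must be shown that the resulting family actually lies in $\St_n$ (i.e.\ that $a$ remains superattracting of \emph{exact} period $n$ after straightening) and that $\{\theta,\overline\theta\}$ is the \emph{unique} nontrivial class of $\sim_0$, with no accidental collisions created by the surgery. Finally, the word ``canonical'' in the statement is doing real work — it is what makes Remark \ref{coroglaire} and Propositions \ref{corolam}--\ref{mainlemma} usable downstream — and your construction gives no uniqueness. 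All of this is exactly what the paper delegates to \cite{C2}.
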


\begin{definition}
Such a family $(P_t)_t$ is called a perturbation of $P$ along $\theta$.
\end{definition}

\begin{remark}\label{coroglaire} It follows from Lemma \ref{sameangle} that any path given by Theorem \ref{Cui} is contained in the closure of a external ray.  Figure \ref{Parm1} represents the four parameter rays corresponding to the four perturbable rays on Figure \ref{Dyn1}.
\end{remark}

\begin{proposition}\label{corolam} With the same notations, for all $t>0$ and every $k\in\Z$, the dynamical escaping lamination of $P_t$ is the dynamical lamination of the perturbation through $\theta$.
\end{proposition}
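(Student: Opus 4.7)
The plan is to prove the equality of $(\mathcal{L}_k, F_k)_{k \in \Z}$ (the escaping lamination of $P_t$) and $(\mathcal{L}_k^\theta, F_k)_{k \in \Z}$ (the lamination of perturbation through $\theta$) level by level, by induction on $k \in \Z$. For $k \leq -1$ both are empty by construction. For $k = 0$, Example \ref{lemni} identifies the unique non-trivial class of $\sim_0$ for $P_t$ as the pair of angles whose generalized rays contain the critical point of $P_t$; Theorem \ref{Cui} guarantees that for $t \neq 0$ these are exactly $\theta$ and $\overline{\theta}$, matching the base case of the inductive definition of $\sim_0^\theta$. Compatibility of $F_1$ with angle tripling is built into both constructions, so there is nothing further to check here.

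For the inductive step, assume $(\mathcal{L}_k, F_k) = (\mathcal{L}_k^\theta, F_k)$. Two observations combine to give the step. First, both $\sim_{k+1}$ and $\sim_{k+1}^\theta$ are refinements of the equivalence obtained by pulling back $\sim_k$ via angle tripling: this is Remark \ref{dynrel} on the escaping side and is hardwired into the definition on the perturbation side. By the induction hypothesis the two pullbacks coincide, so it suffices to match the additional identifications at level $k+1$. Second, a pair of $(k+1)$-generalized rays of $P_t$ meets precisely when they terminate at a common preimage of $-a_t$ on the $(k+1)$-st critical level curve of $g_{P_t}$, i.e. when they bound a common component of $\Omega_{k+1}\setminus\overline{\Omega_k}$ containing such a preimage.

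The geometric bridge between the two pictures is the following. For $P$ parabolic, the Fatou component $V_{-a}$ contains the non-periodic critical point $-a$; its iterated preimages under $P$ form a countable family of bounded Fatou components. Under the perturbation of Theorem \ref{Cui}, $-a$ escapes, so $V_{-a}$ opens up into the basin of infinity of $P_t$ at escaping level $0$, its direct $P$-preimages open up at level $1$, and so on inductively; an iterated preimage of $V_{-a}$ at depth $k+1$ becomes a component of $\Omega_{k+1} \setminus \overline{\Omega_k}$. Meanwhile, by Lemma \ref{Stability}, dynamical rays of $P$ that land at repelling (pre)periodic points on the boundary of any such Fatou component persist under the perturbation with the same landing pattern; the only rays whose behavior is truly altered are the perturbable ones, whose landings at parabolic or co-parabolic points are replaced by common termination at a preimage of $-a_t$ on a critical level of $g_{P_t}$. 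Hence, for $\theta_0, \theta_1$ with $3\theta_0 \sim_k 3\theta_1$, the $(k+1)$-generalized rays of $P_t$ at angles $\theta_0, \theta_1$ meet if and only if the dynamical rays $\mathcal{R}_{\theta_0}(P), \mathcal{R}_{\theta_1}(P)$ land on the boundary of a common Fatou component of $P$ in the grand orbit of $V_{-a}$, which is precisely the condition $\theta_0 \sim_{k+1}^\theta \theta_1$. The matching of $F_{k+1}$ and of the gap maps $\phi_{k+1}$ versus $\phi_{k+1}^\theta$ is then tautological, since both are defined by sending a gap to the region bounded by the corresponding (generalized) rays and by conjugating through the action of $P_t$.

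The main obstacle is the geometric input of the previous paragraph: rigorously identifying, for small $t > 0$, the iterated preimages of $V_{-a}$ under $P$ with the components of the escaping annuli $\Omega_{k+1} \setminus \overline{\Omega_k}$ of $P_t$, with matching boundary behavior of rays. This requires a careful local analysis of Cui's perturbation at the parabolic cycle to confirm that the parabolic Fatou component opens in the access prescribed by $\theta$ (so that the paired critical angles are exactly $\theta$ and $\overline{\theta}$), combined with repeated application of Lemma \ref{Stability} to propagate the stability of the non-perturbable rays to all iterated preimages of the parabolic cycle. Once this dictionary between Fatou components of $P$ and escaping annuli of $P_t$ is established, the induction runs uniformly in $k$.
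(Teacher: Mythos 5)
Your proposal follows essentially the same path as the paper: both arguments are anchored by Theorem \ref{Cui} to identify the critical ray pair $\{\theta,\overline\theta\}$ and then invoke Lemma \ref{Stability} to propagate landing patterns of rational rays at repelling periodic points through the perturbation, thereby matching gaps and their dynamics. You unpack into an explicit induction on $k$ (with the geometric dictionary between iterated preimages of $V_{-a}$ and the escaping annuli $\Omega_{k+1}\setminus\overline{\Omega_k}$) what the paper compresses into the single assertion that ``from Theorem \ref{Cui} we deduce that the laminations are the same,'' and the obstacle you flag at the end — rigorously establishing that the Fatou components open up at the prescribed escaping levels with matching ray boundary behavior — is precisely the content the paper delegates to Cui's work without further elaboration.
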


\begin{proof}
From Theorem \ref{Cui} we deduce that the laminations are the same. 
Consider a $k$-gap $\gamma$ of ${\cal L}^\theta_k$ for some $k\in\Z$. There exists a dense set of rays in $\gamma$ of rational angle with land at a repelling periodic point of $P_0$. According to Lemma \ref{Stability}, we can follow continuously the landing point of these rays along the family $P_t$ for $t$ small and the dynamics is preserved. It follows that the dynamics of the gaps is preserved.
\end{proof}

Following \cite{Ha}Appendice A we define:

\begin{definition}\label{defcopy} Let $P$ and $Q$ be polynomials with a connected filled Julia sets. We say that $Q$ has a rigid figuration in $P$ if there exists a neighborhood $U$ of $K_Q$ and $\phi:U\to\C$ continuous and injective such that $\phi\circ Q=P\circ \phi$ on $K_Q$ and such that:
\begin{itemize}
\item $\phi$ and $\phi^{-1}$ preserve the zero mesure sets;
\item $\phi\in{\cal W}^{1,p}$ for some $p>1$;
\item $\overline\partial \phi=0$ almost everywhere on $K_Q$.
\end{itemize}
\end{definition}

We will say that $\phi(K_Q)$ is the filled Julia set of $Q$ in $P$.

\begin{remark}\label{remrep}
As remarked in \cite{Ha} (Proposition A1), we note that with this definition the image by $\phi$ of a repulsive point is a repulsive point. 
\end{remark}

Recall that if $P\in\St_n$ is a parabolic polynomial and ${\cal R}_\theta$ a perturbable ray of $P$, then ${\cal R}_\theta$ and ${\cal R}_{\overline \theta}$ land by definition at the boundary of the same Fatou component $V_{-a}$ containing the critical point $-a$.

\begin{definition}
Assume that there exists a rigid figuration of $Q$ in a parabolic polynomial $P\in\St_n$. We say that a perturbable ray $ {\cal R}_\theta$ cut this figuration if the orbit of $a$ inside the filled Julia set of $Q$ in $P$ visits the two components of $\C\setminus( {\cal R}_{\theta}\cup \overline{V_{-a}}\cup {\cal R}_{\overline\theta}).$
\end{definition}

\begin{proposition}\label{mainlemma}
Let $P$ be a parabolic polynomial in $\St_n$ and let $(P_t)_t$ be a perturbation of $P$ along one of its perturbable rays $\theta$. 

If for some $k\geq 1$ that we suppose to be minimum, there exists a quadratic polynomial $Q$ with a periodic critical point that has a rigid figuration in $P^k$ and if $\theta$ is cutting this figuration
then, for $t>0$ small enough, $Q$ is the connected restriction of $P_t$.
\end{proposition}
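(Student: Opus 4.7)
The strategy is to promote the rigid figuration of $Q$ in $P^k$ to a genuine quadratic-like restriction of $P_t^k$ near $a_t$ for $t > 0$ small, and then identify its hybrid class with $Q$ via the straightening theorem.

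I would first thicken $\psi(K_Q) \subset K_P$ into a Jordan domain $V_0$ whose boundary consists of equipotential arcs together with finitely many external rays of $P$ landing at repelling (pre)periodic points of $P$ on $\psi(\partial K_Q)$, arranged so that the component $U_0$ of $P^{-k}(V_0)$ containing $a$ is compactly contained in $V_0$. The analytic conditions in Definition \ref{defcopy} guarantee that $\psi$ is a hybrid equivalence between $Q$ and the quadratic-like map $(P^k, U_0, V_0)$; in particular $Q$ is hyperbolic with a superattracting cycle since its critical point is periodic. Next I would deform: by Lemma \ref{Stability} and Remark \ref{remrep} the chosen landing points and their rays persist for $P_t$ with $t$ small, yielding transported domains $U_t \Subset V_t$ with $P_t^k : U_t \to V_t$ a proper holomorphic map. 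The cutting condition on $\theta$ forces this map to have degree exactly $2$: after the surgery of Theorem \ref{Cui} the second critical point $-a_t$ escapes along the critical ray $\theta$, which the cutting hypothesis places outside $U_t$, so $-a_t \notin U_t$.

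The triple $(P_t^k, U_t, V_t)$ is therefore quadratic-like with critical point $a_t$ of exact period $n/k$ under the first return map, hence hyperbolic with the same combinatorial type of critical orbit as $(P^k, U_0, V_0)$ (the combinatorics is preserved because the bounding repelling rays move continuously and unbrokenly). By the straightening theorem its hybrid class contains a unique representative in $\{z \mapsto z^2 + c\}$, and since hyperbolic components of the Mandelbrot set with superattracting center are determined by the combinatorial type of the critical orbit, this representative must coincide with $Q$. Thus $Q$ is the connected restriction of $P_t$. The minimality of $k$ is used at this last step to identify $k$ with the actual period of the component of $K_{P_t}$ containing $a_t$: a shorter return period would yield a rigid figuration of the return map in some iterate $P^{k'}$ with $k' < k$, contradicting the minimality hypothesis.

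The main obstacle is the degree count in the middle step: confirming topologically that $-a_t \notin U_t$ after perturbation. This is the one place where the cutting condition is essential, and it requires a careful dynamical-plane analysis near the parabolic cycle, tracking how the orbit of $-a$, initially trapped in the parabolic Fatou component $V_{-a}$, migrates across $\theta \cup \overline{V_{-a}} \cup \overline\theta$ to the escaping side once $t > 0$. A secondary technical point is ensuring that Lemma \ref{Stability} applies down to $t = 0$ despite the parabolic nature of $P_0$; this is legitimate because the landings one uses to cut out $V_0$ are at genuinely repelling points of $P_0$, and Lemma \ref{Stability} imposes no condition on the ambient polynomial.
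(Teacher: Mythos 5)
Your proof takes a genuinely different route from the paper's. The paper does \emph{not} construct an explicit quadratic-like restriction at all: it quotes Proposition~\ref{corolam} to deduce that the escaping lamination of $P_t$ has an end of exact period $k$, hence that a connected restriction $Q_t$ exists with a period-$n/k$ critical point, and then identifies $Q_t$ with $Q$ by a purely combinatorial argument on ray pairings. Concretely, it uses Lemma~\ref{Stability} to transport the repelling-cycle landings of the figuration, a separate counting argument for the quadratic repelling cycles to show that the periodic rays which landed at the parabolic cycle of $\hat K_Q$ must land at the single repelling cycle born from the perturbation, and a nested-rays/no-crossing argument to rule out the landing pattern splitting; finally Lemma~\ref{geodlam} converts agreement of rational ray pairings into $Q_t=Q$. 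Your approach instead tries to build a puzzle piece $V_0$, straighten the resulting quadratic-like map, and identify the hybrid class by combinatorial rigidity of centers in the Mandelbrot set. This would be a cleaner mechanism if it worked, but it has two concrete gaps that the paper's argument is specifically designed to avoid.

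First, the construction of $V_0$. You propose to bound $V_0$ by equipotentials and by external rays of $P$ landing at repelling (pre)periodic points on $\psi(\partial K_Q)$, and to invoke Lemma~\ref{Stability}. But the cycle of $P$ at which the intertwining is glued is parabolic, it lies on $\partial\hat K_Q$, and the parabolic Fatou component $V_{-a}$ containing $-a$ is attached there. To cut $V_0$ so that it excludes $V_{-a}$ you must work arbitrarily close to the parabolic cycle; the rays that separate $V_{-a}$ from $\hat K_Q$ are the parabolic rays $\theta,\overline\theta$ and their iterated preimages, which land at parabolic (hence non-repelling) points, so Lemma~\ref{Stability} does not apply to them. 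You acknowledge this as a ``secondary technical point'' and assert that only repelling landings are used, but you never verify that repelling landings alone can seal off the parabolic basin; this is exactly the difficulty the paper confronts head-on, via the counting argument that forces the perturbed parabolic rays to land coherently at the new repelling cycle. Without that step your $V_0$ need not exist as described.

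Second, your use of the cutting hypothesis is off target. You claim that cutting ``places $\theta$ outside $U_t$, so $-a_t\notin U_t$,'' i.e.\ that cutting yields the degree-$2$ count. But the cutting condition concerns the orbit of the \emph{periodic} critical point $a$ inside the figuration: it says that $a$'s orbit visits both components of $\C\setminus(\mathcal R_\theta\cup\overline{V_{-a}}\cup\mathcal R_{\overline\theta})$, which if anything pushes $\theta,\overline\theta$ \emph{through} the region your $U_t$ would need to enclose, not away from it. In the paper the cutting hypothesis is used, through Proposition~\ref{corolam}, to guarantee that the escaping lamination of $P_t$ has an end of \emph{exact} period $k$ (together with the assumed minimality of $k$), i.e.\ to pin down the period of the component of $K_{P_t}$ containing $a_t$ and hence the period $n/k$ of the critical orbit of the connected restriction. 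The degree-$2$ property you want follows from $-a_t$ escaping (Theorem~\ref{Cui}) once the puzzle piece and its forward images are disjoint small neighborhoods of the period-$k$ cycle of components, but getting that period right is where the cutting hypothesis actually enters. As written your argument both misreads the role of the hypothesis and leaves the key period statement unjustified.
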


To prove this result we use the following lemma whose proof is left to the reader and follows from the general study of geodesic laminations (see \cite{T} for example).

\begin{lemma} \label{geodlam}
If two quadratic polynomials with a periodic critical point are not conjugated by a translation then there exist two rational rays landing at the same point for the first one which are not for the second one.
\end{lemma}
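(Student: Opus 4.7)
The plan is to prove the contrapositive by reconstructing the polynomial from its rational co-landing relation. Assume $p_{c_1}(z)=z^2+c_1$ and $p_{c_2}(z)=z^2+c_2$, each with a critical point of exact period $n$, induce the same co-landing equivalence relation on rational angles.

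First, since each $p_{c_i}$ is a center of a hyperbolic component of the Mandelbrot set, its Julia set is locally connected, every rational external ray lands at a repelling (pre)periodic point, and the coinciding co-landing relations amount to equal rational laminations $\mathrm{RL}(p_{c_1})=\mathrm{RL}(p_{c_2})$, realized as Thurston-type geodesic laminations of $\D$.

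Second, I would reconstruct the abstract angled Hubbard tree of $p_{c_i}$ from $\mathrm{RL}(p_{c_i})$: vertices are the co-landing classes met by the forward orbit of the critical value, the cyclic order around a vertex is read off from the circular order of its angles on $\partial \D$, edges are the unlinked incidences between consecutive classes, and the dynamics is the descent of angle-doubling. This produces an intrinsic angled Hubbard tree depending only on $\mathrm{RL}$.

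Third, I would invoke the Bielefeld--Fisher--Hubbard/Poirier rigidity theorem: a post-critically finite polynomial is determined, up to affine conjugacy, by its abstract angled Hubbard tree. Hence $p_{c_1}$ and $p_{c_2}$ are affinely conjugate; an affine conjugacy between monic centered quadratics is necessarily a translation, and direct coefficient comparison forces it to be the identity, so $c_1=c_2$, contradicting the hypothesis.

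The main obstacle is step two: checking that the rational lamination really captures the full angled Hubbard tree, including cyclic order around branch points and the marking of the ends by external angles. For centers of hyperbolic components this is routine, because the rays landing at the roots of all Fatou components on the critical cycle belong to the rational lamination, but it does require some careful bookkeeping. A lighter alternative uses Thurston's parameterization of hyperbolic components by their characteristic (minor) leaf, reducing the problem to recognizing this distinguished periodic leaf inside $\mathrm{RL}(p_{c_i})$ as the unique periodic leaf whose two endpoints are identified by angle-doubling.
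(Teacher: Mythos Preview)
The paper does not actually prove this lemma: it is stated with the remark that the proof ``is left to the reader and follows from the general study of geodesic laminations (see \cite{T} for example).'' Your contrapositive argument---equal rational laminations $\Rightarrow$ same angled Hubbard tree $\Rightarrow$ affine conjugacy via Poirier/BFH rigidity $\Rightarrow$ $c_1=c_2$---is a correct and natural way to supply the omitted details, and it lies squarely within the lamination framework the paper points to.

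Two small remarks. First, in your Step~2 the phrase ``co-landing classes met by the forward orbit of the critical value'' is slightly off, since for a hyperbolic center the critical orbit lies in the interior of $K$ and carries no landing rays; what you really use (and acknowledge in your last paragraph) are the classes at the \emph{roots} of the periodic Fatou components, which do sit in the rational lamination and suffice to rebuild the angled tree. Second, in your ``lighter alternative'' the leaf whose two endpoints are identified by angle-doubling is the \emph{major} (a diameter), not the minor; the minor is its image under doubling, and it is the minor that Thurston's quadratic minor lamination uses to separate distinct hyperbolic centers. With that correction the alternative route also goes through and is closer in spirit to the bare citation the paper gives.
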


\begin{proof}(of Proposition \ref{mainlemma})
It follows from Proposition \ref{corolam} 
that, for $t>0$, the escaping lamination of $P_t$ has an end of exact period $k$. Thus it has a connected restriction $Q_t$ which has a critical point of period $n/k$ as $Q$ does. Without loose of generality we can suppose that $Q$ lies in the family $P_c:z\mapsto z^2+c$ with $c\in \C$. It remains to prove that $Q_t=Q$. We denote by $\hat K_Q$ the filled Julia set of $Q$ in $P$ and by $\hat K_{t}$ the filled Julia set of ${Q_t}$ in $P_t$.

On the one hand, one can follow the repulsive cycles of $\hat  K_Q$ in a small neighborhood of $P$. Lemma \ref{Stability} assures that the rays landing at these cycles can be also followed, so the repulsive cycles of $\hat  K_Q$ become repulsive cycles for $\hat  K_t$ with same period.

On the second hand, a global counting of the repulsive cycles for quadratic polynomials with a period $n/k$ critical point allows to deduce that, the perturbation of the parabolic cycle of $\hat  K_Q$ create a repulsive cycle of $\hat  K_t$ of exact same period (for the first return map on $\hat  K_t$). 
The rays landing at the parabolic cycle are periodic rays so they have to land to a cycle of a period dividing their periods. They don't have any other choice than landing at the repulsive cycle of $\hat  K_t$ created by the perturbation of the parabolic one. 

In addition, if two rays landing at the same parabolic point on $\hat  K_Q$ lands at different points of the new repulsive cycle, then, considering that they would be separated by a pair of ray in $\hat  K_t$ landing at a same repulsive point in a different cycle, they would have been already separated by this pair in $\hat  K_Q$ which is a contradiction. 

We conclude the proof by applying Lemma \ref{geodlam}. 

\end{proof}

Note that one could certainly prove that there is a continuous motion of the figuration all along the perturbation, however this is not necessary for our purpose.


\section{Parabolic intertwinings and perturbation}\label{Chap4}

\subsection{Construction and existence.}

In this section we recall heuristically the construction of parabolic intertwinings by  P. Haïssinsky in \cite{Ha} as a generalization of \cite{EY}.

Consider two polynomials: $Q$ and $R$ of degree $2$ with connected Julia set. Take a non attractive fix point $\alpha_Q$ of $Q$ and a non attractive periodic point $\alpha_R$ of $R$ such that $\alpha_Q$ and $\alpha_R$ have the same rotation number $p/q$. Choose a dynamical ray landing at $\alpha_Q$. We will call this ray the principal access of this construction. 
Cut both of the basins of infinity along the dynamical rays landing at points in the orbit of $\alpha_Q$ and $\alpha_R$. 
After identifying $\alpha_Q$ and $\alpha_R$, glue carefully the two polynomials along a "sectorial neighborhood" of these rays (see Figure \ref{intertw0}) such that the critical point of $R$ lies in the gap created by cutting along the principal access. Do consistently the same gluing  all along the backward orbit of  $\alpha_R$. Adding one point at infinity, we define a topological branched cover of the sphere with a degree $3$ fixed critical point. When this topological polynomial is conjugated to a conformal map, we denote that latter one by $Q\intertw R$ and call it the intertwining of $Q$ and $R$.

 \begin{figure}
  \centerline{\includegraphics[width=12cm]{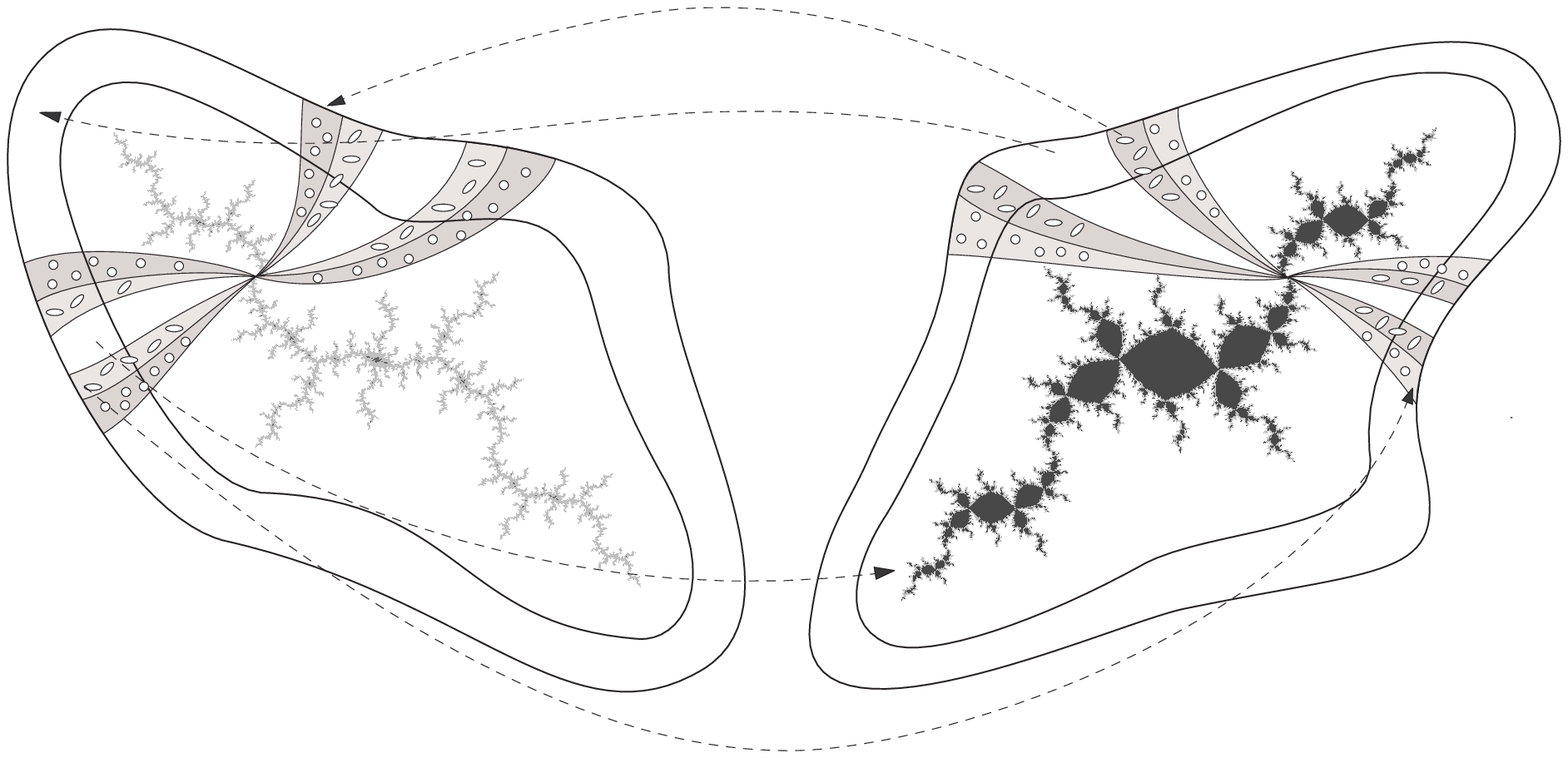}} 
   \caption{Figure from \cite{EY}.}
\label{intertw0} \end{figure}

In \cite{EY}, A.Epstein and M.Yampolsky proved the existence of $Q\intertw R$ for all possible choice of principal access when both $\alpha_Q$ and $\alpha_R$ are not parabolic. P.Haïssinsky in \cite{Ha} extended their results to a more general case allowing one of these two points to be parabolic:

\begin{theorem}If $Q$ or $R$ is hyperbolic then the polynomial ${Q\intertw R}$ is uniquely determined by $\alpha_Q,\alpha_R$ and the principal access landing at $\alpha_Q$. In addition, $Q$ and $R$ have a rigid figuration in 
${Q\intertw R}$.
\end{theorem}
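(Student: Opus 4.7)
The plan is to realize $Q\intertw R$ by quasiconformal surgery and then extract uniqueness and the rigid figurations from the Measurable Riemann Mapping Theorem (MRMT) together with standard rigidity arguments.

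First I would upgrade the topological description of the surgery to a quasiregular one. I fix smooth neighborhoods of the rays landing at the orbit of $\alpha_R$ (together with their counterparts landing at $\alpha_Q$) and, inside the overlap, interpolate between the two B\"ottcher coordinates by an explicit quasiconformal map realizing the prescribed identifications. Performing this simultaneously all along the backward orbit of $\alpha_R$ and extending by $Q$ and $R$ on the filled Julia set sides, I obtain a degree $3$ quasiregular branched self-cover $F$ of the sphere with a super-attracting fixed point at infinity and a second critical point inherited from $R$.

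Next I would construct an $F$-invariant Beltrami coefficient $\mu$ with $\|\mu\|_\infty<1$. On the filled Julia sets of $Q$ and $R$, and on all their iterated preimages under $F$, I set $\mu\equiv 0$, so as to carry over the original complex structures of the two polynomials. On the basin of infinity of $F$, I take the pull-back of the standard structure already fixed near $\infty$ by the B\"ottcher coordinate of $F$. The delicate point, and in my view the main obstacle, is to show that the dilatation introduced by the surgery remains uniformly bounded after arbitrarily many pullbacks. This is precisely where the hypothesis that one of $Q,R$ is hyperbolic enters: if both were parabolic with the same rotation number, orbits passing repeatedly near the identified cycle would accumulate dilatation without control; the hyperbolicity of one factor produces a definite annular expansion transverse to the surgery locus, and a geometric-series estimate then yields the required uniform bound. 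This is the Ha\"\i ssinsky strengthening of the Epstein--Yampolsky setting, where both factors were assumed non-parabolic.

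Once $\|\mu\|_\infty<1$ is secured, the MRMT produces a quasiconformal homeomorphism $\varphi$ of the sphere with $\varphi^*\mu_0=\mu$; after normalization fixing infinity and being tangent to the identity there, $\varphi\circ F\circ\varphi^{-1}$ is holomorphic of degree $3$ with a super-attracting fixed point at $\infty$, hence a monic centered cubic polynomial, which I define to be $Q\intertw R$. The restrictions of $\varphi^{-1}$ to neighborhoods of the filled Julia sets of $Q$ and $R$ inside the model provide the rigid figurations of Definition \ref{defcopy}: they are quasiconformal, hence in $W^{1,p}$ for some $p>1$, they preserve zero-measure sets, and they satisfy $\overline\partial=0$ almost everywhere on the Julia sets since $\mu\equiv 0$ there by construction. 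For uniqueness, any two cubics obtained from the data $(\alpha_Q,\alpha_R,\theta)$ are quasiconformally conjugate off the figurations by the very design of the surgery; McMullen's theorem ruling out invariant line fields on hyperbolic quadratic Julia sets (applied to the hyperbolic factor among $Q,R$), together with the absence of wandering domains in the basin of infinity, then forces this conjugacy to be conformal, hence affine, and the normalization makes it the identity.
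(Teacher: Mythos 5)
The paper does not prove this theorem; it is quoted from Ha\"\i ssinsky \cite{Ha} (extending Epstein--Yampolsky \cite{EY}) and used as a black box, so there is no internal proof to compare against. Your sketch does, however, contain a genuine gap in relation to what the cited result actually asserts.

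Your strategy is a classical quasiconformal surgery: build a quasiregular model $F$, construct an $F$-invariant Beltrami coefficient $\mu$ with $\|\mu\|_\infty<1$, and straighten by the Measurable Riemann Mapping Theorem. That is essentially the Epstein--Yampolsky argument, and it works when \emph{both} $\alpha_Q$ and $\alpha_R$ are repelling. But the whole point of Ha\"\i ssinsky's extension, and the setting in which the paper uses it (its Type 1 and Type 2 intertwinings), is that one of the two points is \emph{parabolic}. In that case the gluing near the parabolic horn does \emph{not} produce a Beltrami coefficient with $\|\mu\|_\infty<1$: the dilatation introduced in the cusp region is unbounded, independently of any expansion available at the other, hyperbolic, factor. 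Your proposed ``geometric-series estimate from annular expansion transverse to the surgery locus'' does not repair this, because the blow-up of dilatation is a local phenomenon at the parabolic point, not something controlled by expansion elsewhere in the orbit. The hypothesis that $Q$ or $R$ is hyperbolic is used to control the \emph{rate} of degeneration, not to eliminate it.

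The correct tool is trans-quasiconformal surgery: one shows that $\mu$ is a \emph{David} Beltrami coefficient (exponential decay of the area where $|\mu|>1-\epsilon$) and invokes David's integrability theorem instead of MRMT. This is exactly why Definition \ref{defcopy} demands only $\phi\in{\cal W}^{1,p}$ for \emph{some} $p>1$ together with preservation of null sets and $\overline\partial\phi=0$ a.e.\ on $K_Q$: the conjugating map is a David homeomorphism, not a quasiconformal one, and would not in general lie in $W^{1,2}$. Your statement that ``they are quasiconformal, hence in $W^{1,p}$'' gets the logic backwards and, if it were true, would make the three bullet points of Definition \ref{defcopy} redundant. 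The same issue affects your uniqueness argument: the two realizations are conjugate by a David map rather than a quasiconformal map, so ruling out nontrivial conjugacies requires a rigidity statement adapted to David maps (absolute continuity with respect to the relevant invariant measures, or the absence of line fields carried by a David pullback), not the quasiconformal ``no invariant line field'' theorem quoted as is. In short: the skeleton of your proposal is the right one, but the central analytic step --- replacing MRMT by David's theorem and tracking the weaker Sobolev regularity through the definition of rigid figuration and through uniqueness --- is precisely what is missing, and it is the crux of Ha\"\i ssinsky's theorem.
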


\begin{figure}[h]
\begin{minipage}[c]{.45\linewidth}
\begin{center}
\includegraphics[width=6cm]{QuadBaz.eps} 
\end{center}
\end{minipage}
\hfill
\begin{minipage}[c]{.45\linewidth}
\begin{center}
\includegraphics[width=6cm]{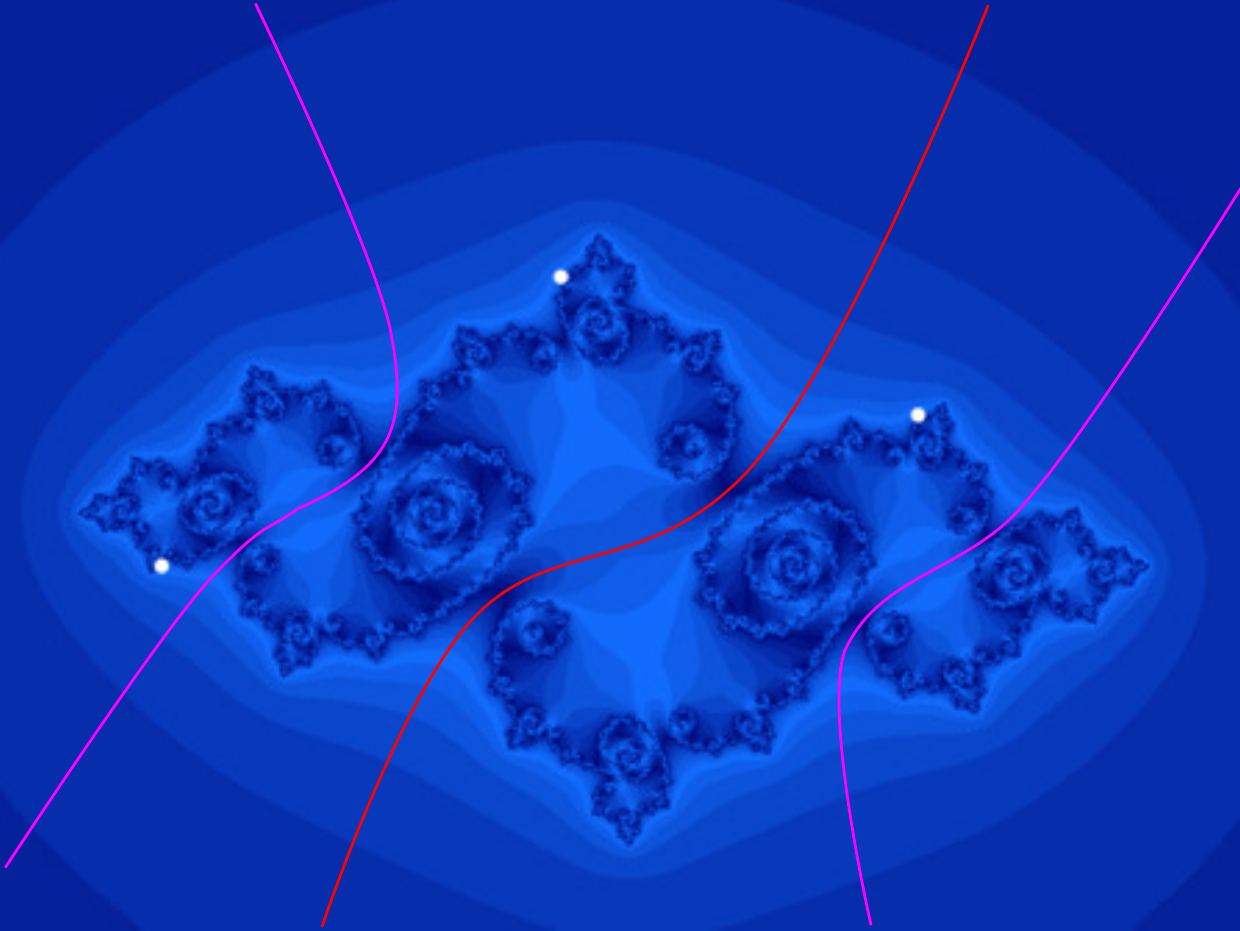} 
\end{center}
\end{minipage}

\begin{minipage}[c]{.45\linewidth}
\begin{center}
\includegraphics[width=6cm]{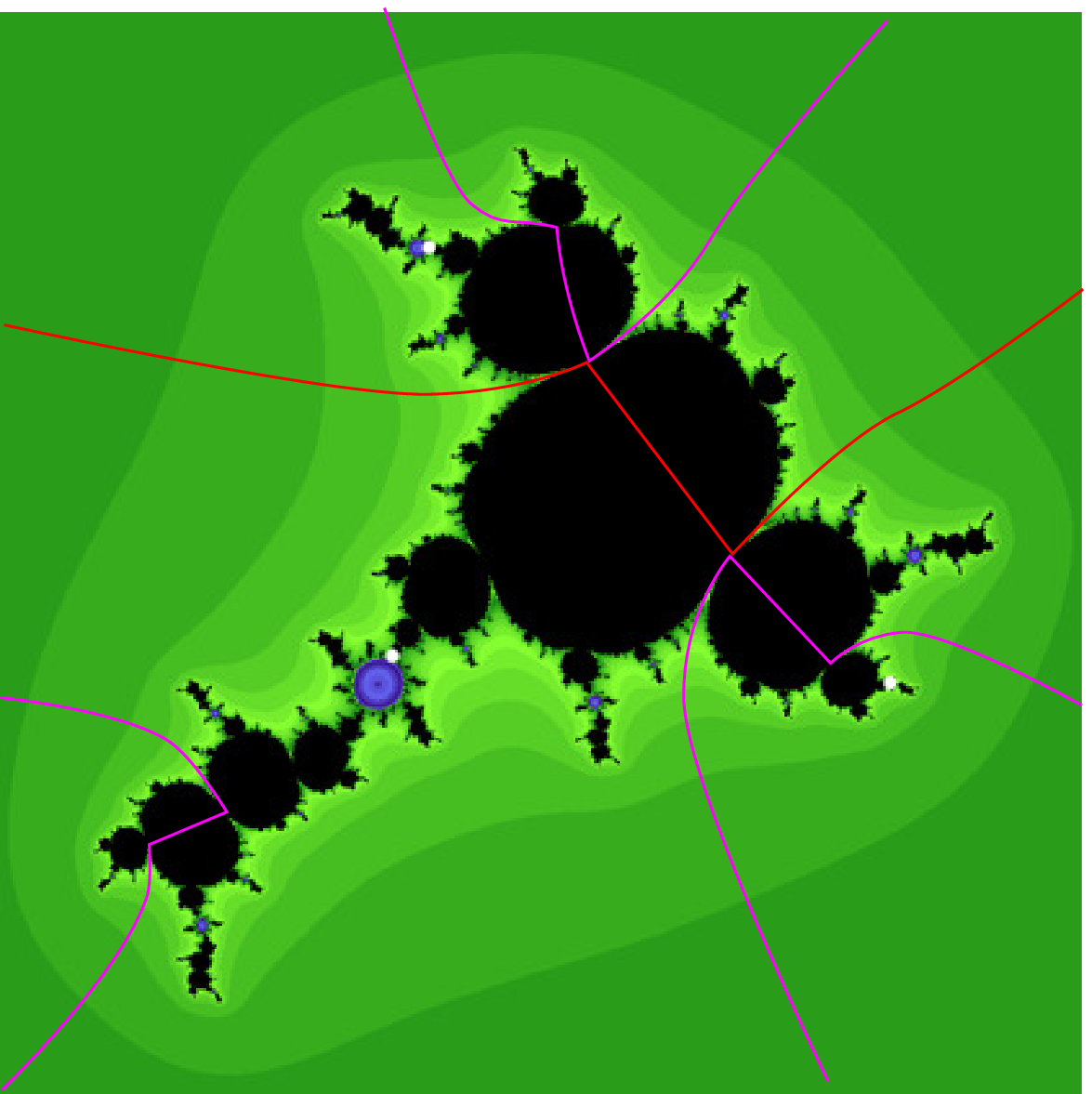} 
\end{center}
\end{minipage}
\hfill
\begin{minipage}[c]{.45\linewidth}
\begin{center}
\includegraphics[width=6cm]{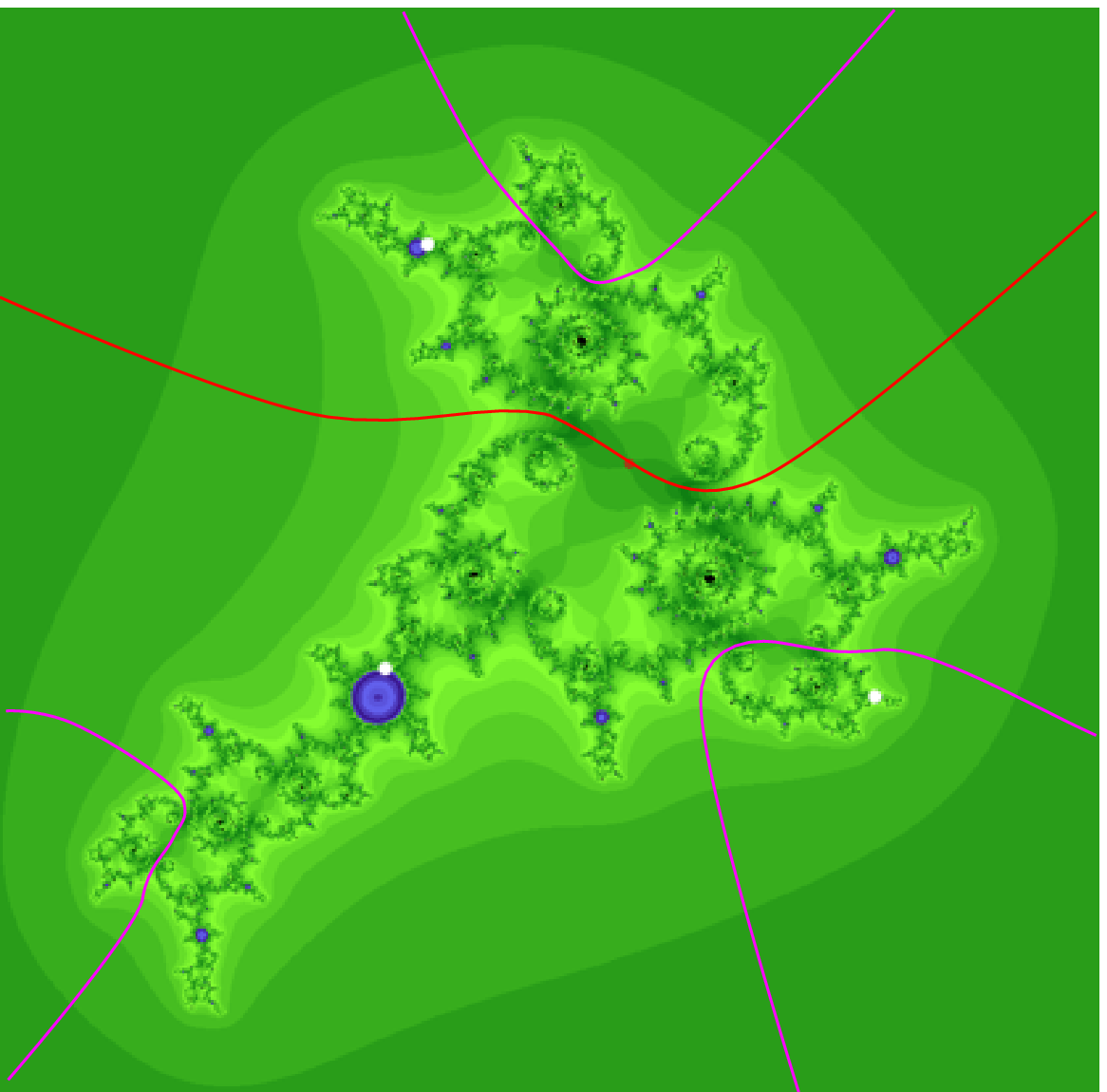} 
\end{center}
\end{minipage}\caption{The two blue pictures show a parabolic perturbation of a quadratic polynomial with the angle involved in the corresponding escaping lamination. The two green pictures show the same perturbation for the same polynomial intertwined at a period 3 cycle represented in white on all the pictures. On the left the white dots represent the orbit of the parabolic periodic point and on the right one, it represents the corresponding repulsive cycle for the perturbed maps.}\label{3}
\end{figure}

In this paper we will be interested in the following cases:

\begin{itemize}
\item {\bf Type 1:} the point $\alpha_R$ is a $n$-periodic point of a polynomial $R$ with a super-attractive cycle of period $n$ and $\alpha_Q$ is a fixed parabolic point of $Q$ (see Figure \ref{Dyn1});
\item {\bf Type 2:} the point $\alpha_Q$ is a $n$-periodic point of a parabolic polynomial $Q$ and $R$ is the polynomial $z^2$ (see the left below picture on Figure \ref{3}).
\end{itemize}


\subsection{Reading escaping trees from Hubbard trees.}\label{reading}

Using Proposition \ref{corolam} and Proposition \ref{mainlemma}, we find the escaping tree and the Hubbard tree of the escape region corresponding to a perturbation of an intertwining along a perturbable ray.

\medskip
\noindent{\textbf{Type 1.}}
Consider a Type 1 intertwining $P$ such that $\alpha_R$ lies on the Hubbard tree of $R$ and $\alpha_Q$ is the parabolic point which direct basin contains a critical point. (Note that it follows form the general studies of the Mandelbrot set, that can always find such a polynomial $Q$ to provide such a $P$.)

By construction there exist alway two rays adjacent the the Fatou component containing the critical point of infinite orbit (see the rays yellow and cyan on Figures \ref{Parm1} and \ref{Dyn1} for example). By definition, these two rays are perturbable. The associated perturbations give a polynomial with a trivial kneading sequence and according to Proposition \ref{mainlemma}, the Hubbard tree associated to the corresponding escape component is the one of $R$.

Consider now any other perturbable ray ${\cal R}_\theta$ (see the ray purple or red on Figures \ref{Parm1} and \ref{Dyn1} for example). In this case $ {\cal R}_\theta$ cut the figuration of $R$ thus the perturbation along it gives an escape component with a non trivial kneading sequence.
One reconstruct the corresponding simplified escaping lamination by taking the iterated preimages of ${\cal R}_{\theta}\cup \overline{V_{-a}}\cup {\cal R}_{\overline\theta}$ and by looking when they separate in different connected components the orbit of $a$.

Recall that a kneading sequence is trivial if it is composed only with 0s. As for $n>1$, we can always find rays landing as described upper, we have shown the following result.

\begin{proposition}
Given an escape component with a trivial kneading sequence, there exists an escape component with non trivial kneading sequence such that the two of them share a point of their boundary. 
\end{proposition}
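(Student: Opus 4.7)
The plan is to realize the given escape component $\mathcal{E}_0$ with trivial kneading sequence as arising from a perturbation of a Type 1 intertwining $P=Q\intertw R$, and then to exhibit a second perturbation of the \emph{same} $P$ along a different perturbable ray that lands in an escape component $\mathcal{E}_1$ with non-trivial kneading sequence. Since both perturbations are paths $(P_t)_t$ with $P_0=P$, the polynomial $P$ lies in $\partial\mathcal{E}_0\cap\partial\mathcal{E}_1$, which is what is required.

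First I would attach to $\mathcal{E}_0$ its Hubbard tree, which by Proposition \ref{BHclass} is encoded by a quadratic polynomial $R$ with a super-attractive critical point of exact period $n$; trivial kneading means the orbit of the periodic critical point stays on a single side of the escaping critical level curve, so $R$ is unconstrained among such quadratic polynomials. I would then pick a periodic point $\alpha_R$ on the Hubbard tree of $R$ together with a quadratic polynomial $Q$ having a fixed parabolic point $\alpha_Q$ of matching rotation number whose direct basin contains a critical point (the existence of such a $Q$ is the standard fact about the Mandelbrot set mentioned in the paragraph preceding the proposition). Ha\"issinsky's theorem then produces the Type 1 intertwining $P:=Q\intertw R\in\St_n$, in which $R$ admits a rigid figuration.

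Next I would use the two perturbable rays adjacent to the Fatou component $V_{-a}$ containing the infinite-orbit critical point. By construction these rays do not cut the figuration of $R$, so Theorem \ref{Cui} gives a canonical perturbation $(P_t)_t$ starting at $P$, and Proposition \ref{mainlemma} identifies the connected restriction of $P_t$ with $R$ for $t>0$ small. Combined with Proposition \ref{corolam}, which identifies the simplified escaping lamination of $P_t$, the escape component reached is exactly $\mathcal{E}_0$ (here one uses that a trivial kneading sequence together with the Hubbard tree $R$ pins down the component, either directly via Proposition \ref{DPclass} when $n=4$, or via the fact that the only non-trivial class of $\sim_k$ in the lamination of perturbation is forced by the two chosen perturbable rays not cutting the figuration).

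Finally, since $n>1$, the intertwining $P$ has at least one additional perturbable ray $\mathcal{R}_\theta$, namely one landing at a point on the orbit of $\alpha_Q$ that is not adjacent to $V_{-a}$; by construction this $\mathcal{R}_\theta$ does cut the figuration of $R$. Applying Theorem \ref{Cui} again produces a perturbation of $P$ whose simplified escaping lamination is generated by the iterated preimages of $\mathcal{R}_\theta\cup\overline{V_{-a}}\cup\mathcal{R}_{\overline\theta}$; these preimages separate the orbit of $a$ inside the figuration, so the corresponding kneading sequence is non-trivial. This yields the desired $\mathcal{E}_1$, and since both perturbations emanate from $P$, the proof is complete.

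The step I expect to be the main obstacle is the careful verification that a Type 1 intertwining $P$ with \emph{any prescribed} Hubbard quadratic $R$ can be arranged so that the two ``non-cutting'' perturbable rays really do recover $\mathcal{E}_0$ and not some other trivial-kneading component; this relies on the labeling bijection between escape components and Hubbard/lamination data, and requires more care in general $n$ than for $n=4$ where Proposition \ref{DPclass} is at hand.
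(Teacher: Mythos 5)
Your proposal matches the paper's proof essentially line by line: construct a Type 1 intertwining $P = Q \intertw R$ with $\alpha_R$ on the Hubbard tree of $R$ and $\alpha_Q$ a parabolic point whose direct basin contains the critical point, perturb along the two rays adjacent to $V_{-a}$ to land in the trivial-kneading component with Hubbard tree $R$, perturb along any other perturbable ray (which exists since $n>1$ and necessarily cuts the figuration of $R$) to land in a non-trivial-kneading component, and conclude that $P$ lies on both boundaries. The one place where you are slightly more explicit than the paper is in flagging that one must check the trivial-kneading component reached is exactly the given $\mathcal{E}_0$ and not some other trivial-kneading component with the same Hubbard tree; the paper glosses over this, while your remark correctly identifies that for a trivial kneading sequence the escaping tree (and lamination) is forced, so Propositions \ref{BHclass} and \ref{DPclass} pin down the component for $n=4$, which is the case the paper ultimately cares about.
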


It is easy to check that two different intertwinings on the sam branch of the Hubbard tree give the same connexions between escape components. Hence, as there are finitely many  quadratic polynomial with a period $n$ critical point, it is possible to enumerate all the possible connections obtained this way. 
We give in Figures \ref{class4} and \ref{class5} a list of the connexions obtained this way for $n=4$ and $n=5$. The complete lists are easy to recover by adding the few missing Hubbard trees of quadratic Julia sets: on Figure \ref{class4} one just have to consider the conjugated ones and on Figure \ref{Mandel5} are missing the conjugated and those which are topologically like the D but with an other rotation number around the central vertex (which will give the same relations as those found for D). {\bf This list is sufficient to deduce Theorem \ref{main}}, ie the connectivity of $\St_4$. 

\begin{remark}Note that our results are consistent for $n=4$ with the observation in \cite{CubPol2} and that it gives the exhaustive list of escaping components at the boundary of the one with a trivial kneading sequence.
We conjecture that this process gives such an exhaustive list for all the periods.
\end{remark}

We remark also that, from our description, it is sufficient to know the dynamics of the quadratic polynomial $Q$ on its Hubbard tree in order to give the possible connexions trough $P$. Hence, our process allows to produce a lot of cubic escaping trees just from quadratic Hubbard trees.

In addition, if one consider a Type 1 intertwining for the period $n$ repulsive point with is at the boundary of the Fatou component containing $a$ (resp. $P^k(a)$ for $k<n$) then according to  \cite{C2} one can perturb it in order to go to an hyperbolic component Type A (resp. Type B) ie a polynomial with the two critical points in the same Fatou component (resp. Fatou component in the same periodic cycle). So we deduce the following proposition:

\begin{proposition}
There are at least $n-1$ Type B and $1$ Type A hyperbolic components sharing points of their boundary with the boundary of each escape component of trivial kneading sequence.
\end{proposition}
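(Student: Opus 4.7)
The plan is, for each escape component $\mathcal{E}_0 \subset \St_n$ with trivial kneading sequence, to produce $n$ parabolic Type 1 intertwinings $P_0, P_1, \ldots, P_{n-1}$, each sitting simultaneously on $\partial \mathcal{E}_0$ and on the boundary of a distinct hyperbolic component (one of Type A and $n-1$ of Type B). First I would extract the quadratic invariant of $\mathcal{E}_0$: by Proposition \ref{BHclass} its connected restriction is a uniquely determined quadratic polynomial $Q_0(z) = z^2 + c$ whose critical point has exact period $n$, with critical Fatou cycle $U_0, U_1, \ldots, U_{n-1}$ indexed so that $U_j$ contains $Q_0^j(0)$. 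On the boundary of each $U_j$ lies a distinguished repelling $n$-periodic point $\alpha_j$, fixed by the first return map $Q_0^n|_{U_j}$ and lying on the Hubbard tree of $Q_0$.

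For each $j$ I would pick a quadratic polynomial $Q^{par}$ with a parabolic fixed point of rotation number matching that of $Q_0^n$ at $\alpha_j$, such that this parabolic point lies on the boundary of the immediate basin of the critical point of $Q^{par}$. The Ha\"{\i}ssinsky extension of the Epstein--Yampolsky theorem recalled earlier then produces the Type 1 intertwining $P_j := Q^{par}\intertw Q_0$ at $(\alpha_{Q^{par}}, \alpha_j)$; this $P_j$ lies in $\St_n$, is parabolic, and contains $Q_0$ as a rigid figuration. By construction $P_j$ carries perturbable rays adjacent to $V_{-a}$ which do \emph{not} cut the figuration of $Q_0$. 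For such a ray $\theta$, Proposition \ref{corolam} identifies the escaping lamination of the perturbation $P_j(t)$ with the lamination of perturbation through $\theta$, whose kneading sequence is trivial (since $\theta$ is adjacent to $V_{-a}$), and Proposition \ref{mainlemma} identifies its connected restriction as $Q_0$. Consequently $P_j(t) \in \mathcal{E}_0$, placing $P_j$ on $\partial \mathcal{E}_0$.

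It remains to connect $P_j$ to a hyperbolic component. Here I would appeal to the result of Cui \cite{C2} invoked in the paragraph just preceding the statement: perturbing the parabolic polynomial $P_j$ along an access based at $\alpha_j$ outside $V_{-a}$ produces a hyperbolic polynomial in which the free critical point $-a$ has escaped into the Fatou component corresponding to $U_j$ in the critical cycle of $a$. For $j = 0$ this is the Fatou component containing $a$ itself, which yields a Type A component; for $j \geq 1$ the two critical points sit in distinct Fatou components of a common period-$n$ cycle, yielding a Type B component.

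The main obstacle is to verify that the $n-1$ Type B hyperbolic components arising from $j = 1, \ldots, n-1$ are pairwise distinct. Each carries the combinatorial invariant given by the cyclic distance, within the period-$n$ Fatou cycle of $a$, between the component containing $a$ and the component containing $-a$; this invariant is locally constant on the hyperbolic locus. The construction reads off this distance as the index $j$, so different values of $j$ must produce different hyperbolic components, which together with the single Type A polynomial obtained for $j = 0$ gives the claimed count.
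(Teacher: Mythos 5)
Your proposal follows the same route the paper takes. The paper justifies this proposition entirely by the short paragraph preceding it: for each $k = 0, \ldots, n-1$ it forms the Type~1 intertwining at the period-$n$ repelling point on the boundary of the Fatou component containing $P^k(a)$, and appeals to Cui's result \cite{C2} to perturb this parabolic polynomial into a Type~A ($k=0$) or Type~B ($k\geq 1$) hyperbolic component; the fact that the same parabolic polynomial also sits on $\partial\mathcal{E}_0$ is left implicit from the earlier Type~1 discussion. You make explicit two points the paper compresses: that the intertwining really lies on $\partial\mathcal{E}_0$ (checked through the connected restriction $Q_0$ and Propositions \ref{corolam} and \ref{mainlemma}), and that the $n-1$ Type~B components are pairwise distinct, tracked by the index $j$. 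Two small caveats, neither of which changes the outcome: (i) you invoke Proposition \ref{mainlemma} for an adjacent perturbable ray that does \emph{not} cut the figuration, whereas that proposition is stated under a cutting hypothesis --- the paper makes the same informal appeal, and the conclusion is in fact easier when nothing is cut since the rigid figuration of $Q_0$ persists intact under the perturbation; (ii) your ``cyclic distance'' invariant should be read as the directed index $j$ with $V_{-a} = P^j(V_a)$, not the unsigned circular distance, or else $j$ and $n-j$ would be conflated.
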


\medskip
\noindent{\textbf{Type 2.}}
For the Type 2 intertwinings, the configuration is more complicated because there are infinitely many quadratic polynomials with a parabolic fixed point. There would be a lot to say about which configurations give the same connections but we keep these descriptive remarks for a later publication.

Something to be noted is that, when we perturb a Type 2 intertwining, there is always from our construction a gap of ${\cal L}_1$ for all the $P_t$ with $t>0$ which does not contain any iterate of the periodic critical point. We can deduce that any escape component whose escaping laminations wouldn't have this property does not have a Type 2 intertwining in its boundary. This is the case for the escape component with the tree Figure \ref{Misstree} in $\St_5$. As this component is not connected with an escape component of trivial kneading sequence by a Type 1 intertwining, we proved:

\begin{proposition}\label{prob5}
In $\St_5$, there exists an escape component without Type 1 or Type 2 intertwining in its boundary.
\end{proposition}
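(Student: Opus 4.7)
The plan is to take as witness the specific escape component of $\St_5$ whose simplified escaping tree is drawn in Figure \ref{Misstree}, and to rule out each of the two possible intertwining types separately, working entirely at the combinatorial level of trees and laminations.

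For the Type 2 side I would use the observation isolated just before the statement: by Proposition \ref{corolam} applied to the perturbation family of a Type 2 intertwining, the $1$-escaping lamination of any escape component on whose boundary such an intertwining lies must contain a $1$-gap disjoint from the forward orbit $\{P^j(a)\}_{j}$ of the periodic critical point. Corollary \ref{reconst} lets me reconstruct the dynamical escaping lamination from the simplified escaping tree of Figure \ref{Misstree}, and the task then reduces to listing the $1$-gaps of $\mathcal{L}_1$ and verifying, one by one, that every one of them is visited by the critical orbit. For the Type 1 side I would traverse the finite list of Type 1 boundary connections for $n=5$ produced by the procedure of the preceding paragraphs; this list is parametrized by triples (quadratic Hubbard tree with a critical point of period dividing $5$, i.e.\ of period $1$ or $5$; choice of non-attracting periodic point on that tree with appropriate rotation number; perturbable ray cutting the rigid figuration). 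For each such triple, Proposition \ref{mainlemma} together with the recipe ``iterated preimages of ${\cal R}_\theta \cup \overline{V_{-a}} \cup {\cal R}_{\overline\theta}$, recording when they first separate the orbit of $a$'' yields the simplified escaping tree of the perturbed cubic, and one checks that none of these outputs coincides with the tree of Figure \ref{Misstree}.

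The main obstacle is combinatorial bookkeeping rather than anything conceptual: one must be confident that the Type 1 enumeration is genuinely exhaustive, so that the absence of the target tree is a true absence and not an oversight. The saving grace is that the set of quadratic Hubbard trees with a critical point of period $1$ or $5$ is small and the set of perturbable rays on each is finite, so the verification is tractable and the outcome is essentially already compiled in Figure \ref{class5}.
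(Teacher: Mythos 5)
Your proposal reproduces the paper's own two-part argument: for Type 2, exploit the structural fact (recorded just before the statement) that a Type 2 perturbation always produces a $1$-gap avoiding the critical orbit and check that every $1$-gap of the Figure \ref{Misstree} lamination is visited; for Type 1, use that every Type 1 intertwining has a trivial-kneading-sequence component on its boundary and that the exhaustive enumeration of Figure \ref{class5} does not produce the Figure \ref{Misstree} tree. The only slip is incidental: for a Type 1 intertwining in $\St_5$ the quadratic $R$ must have a critical cycle of period exactly $5$ (since $\alpha_R$ is $n$-periodic with $n=5$), not ``period $1$ or $5$.''
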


As we are only looking at the simplified escaping trees, this last remark shows that the only pair of rays that will give an interesting  non trivial equivalence relation for the perturbed map lands on the copy of the parabolic quadratic polynomial (cf Figure \ref{3}). Hence, again a good knowledge of the dynamics of this parabolic quadratic polynomial is sufficient to produce cubic polynomial escaping trees.

 \begin{figure}
  \centerline{\includegraphics[width=5cm]{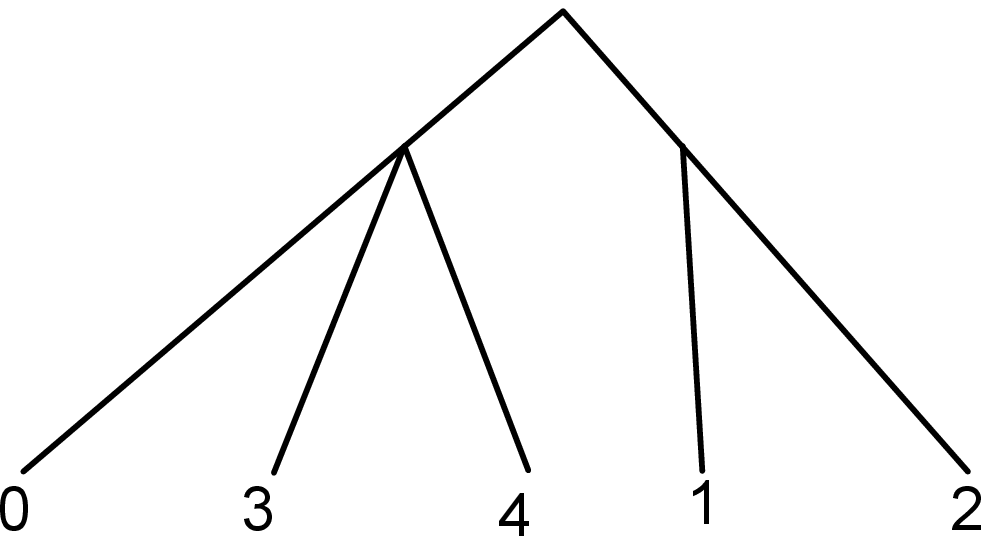}} 
   \caption{This tree cannot be obtained by a perturbation of Type 2 intertwining. It's kneading sequence is 11000.}
\label{Misstree} 
\end{figure}

\medskip
\noindent{\textbf{About $n\geq5$.}}
There are two problems to use our approach for $\St_n$ with $n\geq5$. The first is that Proposition \ref{DPclass} is false for $n=5$ and has no reason to be true for higher periods. For $n=5$, both of the escaping trees with respective kneading sequences 10110 and 11010 (see Figure \ref{class5}) are associated to two different escaping components (this can be proven using the algorithm from \cite{DS2}). However, the escaping laminations contain much more informations that the escaping trees, thus we could use these latter ones to differentiate escaping components with same escaping trees.

The second problem however cannot be solved, this problem has been pointed out in Proposition \ref{prob5}. Note that using intertwined polynomial is very restrictive and that our process can be applied to a more general type of polynomials (cf parabolic polynomials section \ref{sectionparab}). This idea has been discussed with J.Kiwi and we found a polynomial in the boundary of a type A hyperbolic components (both of the critical point in the same Fatou component) that would gives us a connection between the escape component associated with this tree and an other one.

 \begin{figure}
  \centerline{\includegraphics[width=11.5cm]{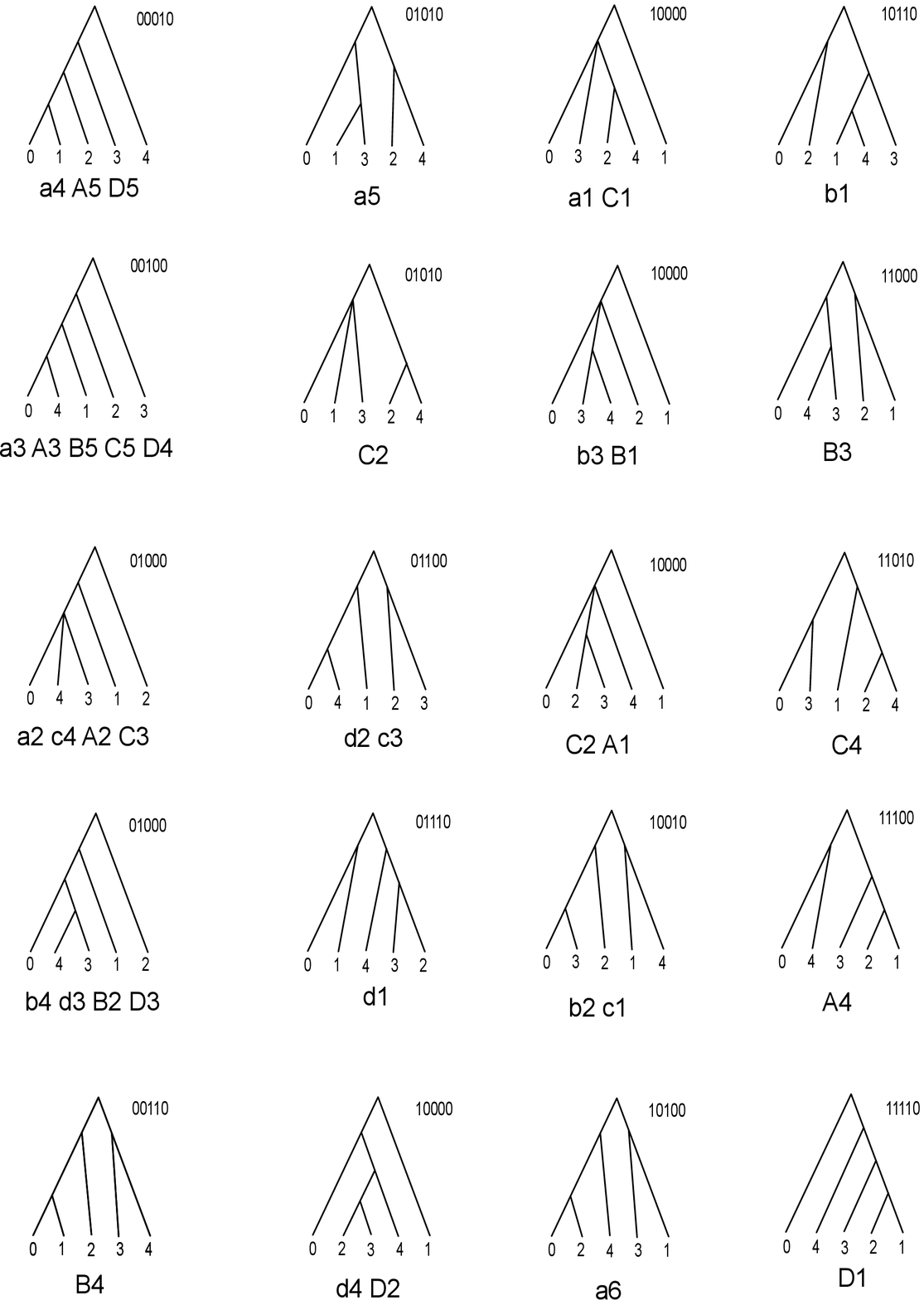}} 
   \caption{All possible simplified escaping trees for polynomials in $\St_5$ obtained by perturbation of intertwining (ie all but the trivial tree and the one on Figure \ref{Misstree}). Below each tree is written the label of an edge of Hubbard tree of Figure \ref{Mandel5}. This edge contains the landing point of the corresponding intertwining's principal access.}
\label{class5} 
\end{figure}

 \begin{figure}
  \centerline{\includegraphics[width=11.5cm]{}} 
   \caption{Enumeration of some Hubbard trees for quadratic polynomials with a period 5 critical point. We affected a random label to every edge.}
\label{Mandel5} 
\end{figure}


\bibliographystyle{alpha}
\bibliography{biblio}

\begin{thebibliography}{BKM10}

\bibitem[BH88]{BH1}
Bodil Branner and John~H. Hubbard.
\newblock The iteration of cubic polynomials. {I}. {T}he global topology of
  parameter space.
\newblock {\em Acta Math.}, 160(3-4):143--206, 1988.

\bibitem[BH92]{BH2}
Bodil Branner and John~H. Hubbard.
\newblock The iteration of cubic polynomials. {II}. {P}atterns and
  parapatterns.
\newblock {\em Acta Math.}, 169(3-4):229--325, 1992.

\bibitem[BKM10]{CubPol2}
Araceli Bonifant, Jan Kiwi, and John Milnor.
\newblock Cubic polynomial maps with periodic critical orbit. {II}. {E}scape
  regions.
\newblock {\em Conform. Geom. Dyn.}, 14:68--112, 2010.

\bibitem[Cui]{C2}
Guizhen Cui.
\newblock From parabolic to escaping?
\newblock In preparation.

\bibitem[DH84]{Orsay}
A.~Douady and J.~H. Hubbard.
\newblock {\em \'{E}tude dynamique des polyn\^omes complexes. {P}artie {I}},
  volume~84 of {\em Publications Math\'ematiques d'Orsay [Mathematical
  Publications of Orsay]}.
\newblock Universit\'e de Paris-Sud, D\'epartement de Math\'ematiques, Orsay,
  1984.

\bibitem[DM08]{Treesph}
Laura~G. DeMarco and Curtis~T. McMullen.
\newblock Trees and the dynamics of polynomials.
\newblock {\em Ann. Sci. \'Ec. Norm. Sup\'er. (4)}, 41(3):337--382, 2008.

\bibitem[DP]{DP4}
Laura DeMarco and Kevin Pilgrim.
\newblock The classification of polynomial basins of infinity.
\newblock Submitted.

\bibitem[DP11a]{DP3}
Laura DeMarco and Kevin Pilgrim.
\newblock Critical heights on the moduli space of polynomials.
\newblock {\em Adv. Math.}, 226(1):350--372, 2011.

\bibitem[DP11b]{DP1}
Laura DeMarco and Kevin Pilgrim.
\newblock Hausdorffization and polynomial twists.
\newblock {\em Discrete Contin. Dyn. Syst.}, 29(4):1405--1417, 2011.

\bibitem[DP11c]{DP2}
Laura DeMarco and Kevin~M. Pilgrim.
\newblock Polynomial basins of infinity.
\newblock {\em Geom. Funct. Anal.}, 21(4):920--950, 2011.

\bibitem[DS10]{DS1}
Laura DeMarco and Aaron Schiff.
\newblock Enumerating the basins of infinity of cubic polynomials.
\newblock {\em J. Difference Equ. Appl.}, 16(5-6):451--461, 2010.

\bibitem[DS13]{DS2}
Laura DeMarco and Aaron Schiff.
\newblock The geometry of the critically periodic curves in the space of cubic
  polynomials.
\newblock {\em Exp. Math.}, 22(1):99--111, 2013.

\bibitem[EY99]{}
Adam Epstein and Michael Yampolsky.
\newblock Geography of the cubic connectedness locus: intertwining surgery.
\newblock {\em Ann. Sci. \'Ecole Norm. Sup. (4)}, 32(2):151--185, 1999.

\bibitem[GM93]{GM}
Lisa~R. Goldberg and John Milnor.
\newblock Fixed points of polynomial maps. {II}. {F}ixed point portraits.
\newblock {\em Ann. Sci. \'Ecole Norm. Sup. (4)}, 26(1):51--98, 1993.

\bibitem[Ha{\"{\i}}00]{Ha}
Peter Ha{\"{\i}}ssinsky.
\newblock Chirurgie crois\'ee.
\newblock {\em Bull. Soc. Math. France}, 128(4):599--654, 2000.

\bibitem[Mil06]{DynInOne}
John Milnor.
\newblock {\em Dynamics in one complex variable}, volume 160 of {\em Annals of
  Mathematics Studies}.
\newblock Princeton University Press, Princeton, NJ, third edition, 2006.

\bibitem[Mil09]{CubPol1}
John Milnor.
\newblock Cubic polynomial maps with periodic critical orbit. {I}.
\newblock In {\em Complex dynamics}, pages 333--411. A K Peters, Wellesley, MA,
  2009.

\bibitem[Poi10]{Poirier}
Alfredo Poirier.
\newblock Hubbard trees.
\newblock {\em Fund. Math.}, 208(3):193--248, 2010.

\bibitem[Thu09]{T}
William~P. Thurston.
\newblock On the geometry and dynamics of iterated rational maps.
\newblock In {\em Complex dynamics}, pages 3--137. A K Peters, Wellesley, MA,
  2009.
\newblock Edited by Dierk Schleicher and Nikita Selinger and with an appendix
  by Schleicher.

\end{thebibliography}

\end{document}